\newcommand{\leqnomode}{\tagsleft@true}
\newcommand{\reqnomode}{\tagsleft@false}
\numberwithin{equation}{section}
\newtheorem{theorem}{Theorem}[section]
\newtheorem{lemma}[theorem]{Lemma}
\newtheorem{proposition}[theorem]{Proposition}
\newtheorem{remark}[theorem]{Remark}
\newcommand{\R}{\mathbb R}
\renewcommand{\rightarrow}{\to}
\providecommand{\ln}{\mathop{\rm ln}\nolimits}
\title[Compact embedding theorems for fractional Orlicz--Sobolev spaces]{Compact embedding theorems and a Lions' type Lemma for fractional Orlicz--Sobolev spaces }
\author[E.D. Silva]{Edcarlos D. Silva}
\author[M.L. Carvalho]{M. L. M. Carvalho}
\author[J.C. \ de Albuquerque]{J. C. de Albuquerque}
\author[S. Bahrouni]{Sabri Bahrouni}
\address[E.D. Silva]{Department of Mathematics,
	Federal University of Goi\'{a}s
	\newline\indent
	74001-970, Goi\'{a}s-GO, Brazil}
\email{\href{mailto:eddomingos@hotmail.com}{eddomingos@hotmail.com}}
\address[M.L. Carvalho]{Department of Mathematics,
	Federal University of Goi\'{a}s}
\email{\href{mailto:marcos_leandro_carvalho@ufg.br}{marcos$\_$leandro$\_$carvalho@ufg.br}}
\address[J.C. de~Albuquerque]{Department of Mathematics,
	Federal University of Pernambuco
	\newline\indent
	50670-901, Recife-PE, Brazil}
\email{\href{mailto:josecarlos.melojunior@ufpe.br ; dealbuquerquejc@gmail.com}{josecarlos.melojunior@ufpe.br ; dealbuquerquejc@gmail.com}}
\address[S. Bahrouni]{Mathematics Department, Faculty of Sciences, University of Monastir, 5019 Monastir, Tunisia}
\email{\href{mailto: sabribahrouni@gmail.com}{sabribahrouni@gmail.com}}
\thanks{Corresponding author: Edcarlos D. Silva}
\thanks{The first author was partially supported by CNPq/Universal 2018 with grant 429955/2018-9. Research supported in part by INCTmat/MCT/Brazil, CNPq and CAPES/Brazil}
\subjclass[2010]{35A01,35A02, 35A15}
\keywords{Fractional Orlicz-Sobolev spaces; Compact embedding; Vanishing and nonvanishing cases; Unbounded or bounded potentials}
\begin{document}

\begin{abstract}
In this paper we are concerned with some abstract results regarding to fractional Orlicz-Sobolev spaces. Precisely, we ensure the compactness embedding for the weighted fractional Orlicz-Sobolev space into the Orlicz spaces, provided the weight is unbounded. We also obtain a version of Lions' ``vanishing'' Lemma for fractional Orlicz-Sobolev spaces, by introducing new techniques to overcome the lack of a suitable interpolation law. Finally, as a product of the abstract results, we use a minimization method over the Nehari manifold to prove the existence of ground state solutions for a class of nonlinear Schr\"{o}dinger equations, taking into account unbounded or bounded potentials.
\end{abstract}

\maketitle


\section{Introduction}

Nonlinear Schr\"{o}dinger equations naturally arise as models to describe many physical phenomenons in a mathematical point of view. For this reason, the study of such equations has made great progress and attracted many authors attention in the last decades. An important example is the following class of equations
\begin{equation}\label{equa}
  -\Delta u+V(x)u= f(x,u),\quad x\in\mathbb{R}^{N},
\end{equation}
which is a reducing of the well known Schr\"{o}dinger equation after making a standing wave ansatz. The literature contains many existence, nonexistence and multiplicity results about its solutions and we do not even try to review the huge bibliography. When looking for solutions via Variational Methods, an important feature of this class of problems relies on the ``lack of compactness'' inherit by the unbounded domain $\mathbb{R}^{N}$. In order to overcome such difficulty, some authors regaining some sort of compactness working with radial symmetric functions. The first in this direction is due to Strauss \cite{strauss} where the potential $V(x)$ is constant. In this important work it was introduced the nowadays well-known Strauss's compactness Lemma for radial symmetric functions. Another way to deal with the lack of compactness is to consider a potential $V(x)$ which is ``large'' at infinity. In this direction, we cite the seminal work of Rabinowitz \cite{Rabinowitz} where it was considered a class of potentials coercive and bounded away from zero. This class of potentials was extended by Bartsch and Wang \cite{bw} where it was introduced the following assumptions:

\begin{enumerate}[label=($V_0$),ref=$(V_0)$]
	\item \label{(V_0)}
	It holds that $V(x) \geq V_0$ for any $x \in \mathbb{R}^N$ where $V_0 > 0$;
\end{enumerate}

\begin{enumerate}[label=($V_1$),ref=$(V_1)$]
	\item \label{(V_1)}
	The set $\{x \in \mathbb{R}^N ; V(x)  < M\}$ has finite Lebesque measure for each $M > 0$.
\end{enumerate}
Under the above conditions the authors were able to restore some compactness. The works \cite{Rabinowitz,bw} have motivated many works regarding to nonlinear Schr\"{o}dinger equations involving different operators and potentials that are large at infinity. Our main contribution here is to extend some ``classical results'' to a general class of operators which generalizes many problems previously studied.

This work is motivated by a very recent trend in the fractional framework, which is to consider a new nonlocal and nonlinear operator, the so-called \textit{fractional $\Phi$-Laplacian}. Throughout this work, we shall consider $\Phi:\mathbb{R}\rightarrow\mathbb{R}$ an even function defined by
\[
\Phi(t)=\int_0^ts\varphi(s)\;\mathrm{d} s,
\]
where $\varphi:\mathbb{R}\rightarrow\mathbb{R}$ is a $C^1$-function satisfying the following assumptions:
\begin{itemize}
	\item[($\varphi_1$)] $t\varphi(t)\mapsto 0$, as $t\mapsto 0$ and $t\varphi(t)\mapsto\infty$, as $t\mapsto\infty$;
	\item[($\varphi_2$)] $t\varphi(t)$ is strictly increasing in $(0, \infty)$;
	\item[($\varphi_3$)] there exist $\ell,m\in(1,N)$ such that
	\[
	\ell\leq \frac {t^2\varphi(t)}{\Phi(t)}\leq m<\ell^*, \quad \mbox{for all} \hspace{0,2cm} t>0.
	\]
\end{itemize}
For $s \in (0, 1)$ and $u$ smooth enough, the \textit{fractional $\Phi$-Laplacian operator} is defined as
\begin{equation}\label{emj1}
(-\Delta_{\Phi})^{s}u(x):=P.V.\int \varphi\left( |D_s u|\right)\frac{D_{s}u}{|x-y|^{N+s}}\,\mathrm{d}y, \quad \mbox{where} \hspace{0,3cm} D_s u:=\frac{u(x)-u(y)}{|x-y|^s}
\end{equation}
and $P.V.$ denotes the principal value of the integral. Throughout the paper we write $\int u\,\mathrm{d}x$ instead of $\int_{\R^N} u(x)\,\mathrm{d}x$. The fractional $\Phi$-Laplacian extends a large class of known nonlocal operators. For instance, if $\varphi\equiv1$, then \eqref{emj1} becomes
\[
(-\Delta)^{s}u(x) = P.V.\int \frac{(u(x)-u(y))}{|x-y|^{N+2s}}\;\mathrm{d}y,
\]
which is the well known \textit{fractional Laplace operator}. For works involving this class of operators, we refer the interested reader to \cite{Ambrosio,caffa,guia,Simone Secchi,FS,Lazkin}. Furthermore, when $\varphi(t)=t^{p-2}, p \in (1, N)$ then \eqref{emj1} reduces to
\[
(-\Delta_{p})^{s} u(x) = P.V.\int \frac{|u(x)-u(y)|^{p-2}(u(x)-u(y))}{|x-y|^{N+ps}}\;\mathrm{d}y,
\]
which is the \textit{fractional p-Laplace operator}. In a similar way, if $\varphi(t)=t^{p-2}+t^{q-2}$, $1 < p < q < N$, then we have the fractional $(p,q)$-Laplacian operator given by the following way
\[
(-\Delta_{p} - \Delta_q)^{s} u = P.V.\int \frac{|u(x)-u(y)|^{p-2}(u(x)-u(y))}{|x-y|^{N+ps}}\;\mathrm{d}y + P.V.\int \frac{|u(x)-u(y)|^{q-2}(u(x)-u(y))}{|x-y|^{N+qs}}\;\mathrm{d}y.
\]
Due to the generality of the fractional $\Phi$-Laplacian operator \eqref{emj1} and motivated by the very recent papers \cite{Bonder,ABS, Sabri3,Bahrouni-Salort, Bonder-2,AlvesAmbrosio}, mainly taking into account the work of Bonder and Salort \cite{Bonder}, our goal is to study the following class of fractional Schr\"{o}dinger equations
 \begin{equation}\label{p1}
(-\Delta_{\Phi})^{s}u+V(x)\varphi(u)u=f(x,u), \quad x\in\mathbb{R}^{N},\tag{$P$}
\end{equation}
where $N > 2 s$, $0<s<1$ and the nonlinear term $f$ is of $C^{1}$ class and satisfies suitable assumptions. Due to the presence of the potential $V(x)$, we introduce the following suitable weighted fractional Orlicz-Sobolev space
\[
X:=\left\{u\in W^{s,\Phi}(\mathbb{R}^{N}):\int V(x)\Phi(|u|)\;\mathrm{d}x<+\infty\right\},
\]
endowed with the norm
$$\|u\|=[u]_{s,\Phi}+\|u\|_{V,\Phi},$$
where
$$\|u\|_{V,\Phi}=\inf\left\{\lambda>0:\int_{\mathbb{R}^N}V(x)\Phi\left(\frac{u(x)}{\lambda}\right)\mathrm{d}x\leq 1\right\}.$$
It is important to emphasize that $X$ is a reflexive Banach space, see \cite{Bahrouni-Emb}. Furthermore,  $X \subset W^{s, \Phi}(\mathbb{R}^N)$ is a closed set. In order to study Problem \eqref{p1} variationally, we need to deal with the ``lack of compactness'' due to the fact that the embedding $X\hookrightarrow L_A(\R^N)$ is not compact for all $N$-function $A<\Phi_*$. Thus, we consider both cases when $V(x)$ satisfies $(V_0),(V_1)$ or it is bounded. In the first case, we borrow some ideas discussed in Bartsch--Wang \cite{bw} to prove compact embedding results of $X$ into the Orlicz spaces. When $V(x)$ is bounded, versions of ``vanishing lemma'' or ``Lions's Lemma'' play a very important role in order to obtain a nontrivial solution for Problem \eqref{p1}. The pioneer result was introduced by Lions in \cite{Lions1} for the standard Sobolev space. After that, many versions were introduced based on the class of operators involved in the problem, see for instance, \cite{Fan1,alves,Secchi}. We point out \cite{alves} where the authors have introduced a version of the ``vanishing Lemma'' for Orlicz--Sobolev spaces. However, up to now, a fractional version for Orlicz-Sobolev spaces was not considered. In general, the proof of these results are based on estimates involving derivatives and some interpolations inequalities. For instance, one can observe that $W^{1, \Phi}(\mathbb{R}^N)$ and $L_{\Phi}(\mathbb{R}^N)$ are interpolation spaces. In our case, for  fractional Orlicz--Sobolev space we are not able to apply the interpolation law.  Thus, we introduce new tricks and a fine analysis on the sequences $(u_n)$ in $W^{s, \Phi}(\mathbb{R}^N)$ that weakly converges to zero.

Summarizing, our work is divided into two parts:

\vspace{0,4cm}

	\noindent \textbf{Part I }-- In the first part, we develop \textit{classical abstract results} regarding to the fractional Orlicz--Sobolev spaces. Precisely, we obtain compact embedding results on the weighted fractional Orlicz--Sobolev space $X$ into the Orlicz space $L_{\Phi}(\mathbb{R}^{N})$, we study the operator $(-\Delta_{\Phi})^{s}+V(x)\varphi(\cdot)$ and we introduce a very useful version of the ``vanishing'' Lions' Lemma to the fractional Orlicz--Sobolev framework.

\vspace{0,4cm}

	\noindent \textbf{Part II} -- In the second part of our work we make use of the abstract results to give two applications regarding to existence of ground state solutions for Problem \eqref{p1}, taking into account unbounded or bounded potential $V(x)$. For this purpose, we use a variational approach based on Nehari method together with a fine analysis on the behavior of minimizing sequences. The main point is to restore the compactness using the abstract results obtained in Part I.

\vspace{0,4cm}

Now we state our abstract results regarding to the Part I. The first main result can be state as follows:

 \begin{theorem}[Compact embedding]\label{comp1}
	Assume that $(\varphi_1)-(\varphi_3)$ and $(V_{0})$--$(V_{1})$ hold. Then, the embedding $X\hookrightarrow L_\Phi(\mathbb{R}^N)$ is compact.
\end{theorem}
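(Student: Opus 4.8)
The plan is to follow the Bartsch--Wang scheme \cite{bw}, the main new input being that every $L^{p}$-type estimate has to be replaced by a modular estimate in the Orlicz scale. I would take $(u_n)\subset X$ bounded; since $X$ is reflexive, after passing to a subsequence $u_n\rightharpoonup u$ in $X$, and because $X$ is a closed subspace the limit $u$ lies in $X$, so replacing $u_n$ by $u_n-u$ it suffices to show that $w_n\rightharpoonup 0$ in $X$ with $\sup_n\|w_n\|<\infty$ implies $w_n\to 0$ in $L_{\Phi}(\mathbb{R}^N)$. Since $(\varphi_3)$ (with $\ell,m$ finite) forces $\Phi$ to satisfy the $\Delta_2$-condition, this is equivalent to the modular convergence $\int_{\mathbb{R}^N}\Phi(|w_n|)\,\mathrm{d}x\to 0$; for the same reason the bound on the Luxemburg norm $\|w_n\|_{V,\Phi}$ upgrades to a uniform bound $\int_{\mathbb{R}^N}V(x)\Phi(|w_n|)\,\mathrm{d}x\le C_1$, while $(V_0)$ gives the continuous embedding $X\hookrightarrow L_{\Phi}(\mathbb{R}^N)$. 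Fixing $\eps>0$, the goal is to produce $R>0$ and $n_0$ with $\int_{\mathbb{R}^N}\Phi(|w_n|)\,\mathrm{d}x<\eps$ for $n\ge n_0$, by splitting $\mathbb{R}^N=B_R\cup B_R^{c}$.

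On the ball $B_R$, the restriction map $W^{s,\Phi}(\mathbb{R}^N)\to W^{s,\Phi}(B_R)$ is bounded and linear, hence weakly continuous, so $w_n\rightharpoonup 0$ in $W^{s,\Phi}(B_R)$; the Rellich--Kondrachov-type compact embedding $W^{s,\Phi}(B_R)\hookrightarrow\hookrightarrow L_{\Phi}(B_R)$ for fractional Orlicz--Sobolev spaces on bounded domains (cf. \cite{Bahrouni-Emb,Bonder}) then yields $w_n\to 0$ in $L_{\Phi}(B_R)$, so $\int_{B_R}\Phi(|w_n|)\,\mathrm{d}x<\eps/3$ for $n$ large, once $R$ has been fixed.

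On the complement $B_R^{c}$, I would choose $M>0$ with $C_1/M<\eps/3$ and write $B_R^{c}=\big(B_R^{c}\cap\{V\ge M\}\big)\cup E_R$ with $E_R:=B_R^{c}\cap\{V<M\}$. On the first set $\Phi(|w_n|)\le M^{-1}V(x)\Phi(|w_n|)$, so its integral is at most $C_1/M<\eps/3$. For $E_R$, hypothesis $(V_1)$ gives $|\{V<M\}|<\infty$, hence $|E_R|\to 0$ as $R\to\infty$; on the other hand $(\varphi_3)$ (through $m<\ell^{*}$) gives $\Phi\ll\Phi_*$, so the continuous fractional Sobolev embedding $W^{s,\Phi}(\mathbb{R}^N)\hookrightarrow L_{\Phi_*}(\mathbb{R}^N)$ makes $(w_n)$ bounded in $L_{\Phi_*}(\mathbb{R}^N)$ and the family $\{\Phi(|w_n|)\}_n$ uniformly integrable (the Orlicz analogue of the de la Vall\'ee-Poussin criterion). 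Hence $R$ can be enlarged so that $\int_{E_R}\Phi(|w_n|)\,\mathrm{d}x<\eps/3$ for every $n$. Summing the three bounds gives $\int_{\mathbb{R}^N}\Phi(|w_n|)\,\mathrm{d}x<\eps$ for $n$ large, i.e. $w_n\to 0$ in $L_{\Phi}(\mathbb{R}^N)$.

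The genuinely Orlicz-specific point — and the step I expect to be the main obstacle — is the uniform integrability of $\{\Phi(|w_n|)\}$ on sets of small measure: one must show that the index gap $m<\ell^{*}$ in $(\varphi_3)$ forces $\Phi\ll\Phi_*$, and then quantify, via an Orlicz H\"older/uniform-integrability lemma, that boundedness in $L_{\Phi_*}(\mathbb{R}^N)$ prevents the $\Phi$-mass from concentrating on small sets; this replaces the elementary inequality $\int_E|w_n|^{p}\,\mathrm{d}x\le |E|^{1-p/p^{*}}\|w_n\|_{p^{*}}^{p}$ used in the classical proof. The remaining work — the $\Delta_2$ bookkeeping between Luxemburg norms and modulars, and checking that the local compact embedding applies to weakly null sequences — is routine.
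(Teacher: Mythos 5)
Your argument is correct and follows essentially the same route as the paper: the same decomposition of $\mathbb{R}^N$ into a large ball (local compact embedding), the region $B_R^c\cap\{V\ge M\}$ (controlled by the uniform bound on $\int V(x)\Phi(|u_n|)\,\mathrm{d}x$), and the finite-measure set $\{V<M\}$, where the paper carries out exactly the uniform-integrability step you flag as the main obstacle — via Lemma \ref{Aux}, which produces an $N$-function $R$ with $R\circ\Phi<\Phi_*$, followed by the Orlicz--H\"older bound $\int_{\mathcal{B}}\Phi(u_n)\le 2\|\Phi(u_n)\|_R\|\chi_{\mathcal{B}}\|_{\widetilde{R}}$ with $\|\chi_{\mathcal{B}}\|_{\widetilde{R}}\le C_0|\mathcal{B}|R^{-1}(1/|\mathcal{B}|)\to 0$. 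The only cosmetic difference is that the paper reduces to modular convergence via the Br\'ezis--Lieb lemma and Fatou rather than by subtracting the weak limit.
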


\begin{remark}\label{lambda1} We point out that Theorem \ref{comp1} implies that
	\begin{equation}\label{lambda11}
	\lambda_{1}:=\inf_{u\in X\backslash\{0\}}\left\{\frac{\displaystyle\iint \Phi\left(\frac{|u(x)-u(y)|}{|x-y|^{s}}\right)\frac{\mathrm{d}x\mathrm{d}y}{|x-y|^{N}}+\int V(x)\Phi(|u|)\;\mathrm{d}x}{\displaystyle\int \Phi(|u|)\;\mathrm{d}x}\right\}
	\end{equation}
	is attained. In particular, $\lambda_1 > 0$. On this subject, for bounded domains we refer the interested reader to \cite{Salort,Salort-1}.
\end{remark}

In order to state our second main result on compact embedding, we recall the property from \cite[Theorem 2]{rao}, precisely, for given $\Psi$ such that $\Psi<<\Phi_{*}$ with $\Psi \in \Delta_2$, there exists an $N$--function $R$, such that $\Psi\circ R<\Phi_{*}$. In this way, we can state our second main result as follows:

\begin{theorem}[Compact embedding]\label{compact}
	Assume that $(\varphi_{1})$--$(\varphi_{3})$ and $(V_{0})$--$(V_{1})$ hold. Suppose that $\Phi < \Psi<<\Phi_{*}$ and at least one of the following conditions hold:
	\begin{itemize}
		\item[(i)]  The following limit holds
		\begin{equation}\label{mla1}
		\limsup_{\vert t \vert\rightarrow0}\frac{\Psi(\vert t\vert)}{\Phi(\vert t\vert)}<+\infty.
		\end{equation}
		\item[(ii)] Suppose that $\Psi\in\Delta_{2}$ and there exists $b\in(0,1)$ such that
		\begin{equation}\label{mla3}
		\Psi(\widetilde{R}(|t|^{1-b})) \leq  C\Phi(|t|), \quad \vert t\vert\leq 1,
		\end{equation}
		where $\tilde{R}$ is the complementary function of $R$.
	\end{itemize}
	Then, the space $X$ is compactly embedded into $L_{\Psi}(\mathbb{R}^{N})$.
\end{theorem}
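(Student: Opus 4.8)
The plan is to run a standard Orlicz truncation scheme: use the strong $L_\Phi$-convergence supplied by Theorem~\ref{comp1} on the part of $\R^N$ where $|u_n|$ is small, and the continuous embedding into $L_{\Phi_*}$ together with $\Psi\ll\Phi_*$ on the part where $|u_n|$ is large. Concretely, let $(u_n)\subset X$ be bounded. Since $X$ is reflexive and $X\subset W^{s,\Phi}(\R^N)$ is closed and convex (hence weakly closed), along a subsequence $u_n\rightharpoonup u$ in $X$ with $u\in X$; replacing $(u_n)$ by $(u_n-u)$ I may assume $u_n\rightharpoonup 0$. I must then show $\|u_n\|_{L_\Psi}\to 0$, and for this it suffices to prove that $\int\Psi(\lambda|u_n|)\,\mathrm{d}x\to 0$ for every fixed $\lambda>0$ (arguing at all scales $\lambda$ avoids needing $\Psi\in\Delta_2$, which is not assumed under (i)). Throughout I will use: (a) by Theorem~\ref{comp1}, $\int\Phi(|u_n|)\,\mathrm{d}x\to 0$; (b) by the continuous embedding $X\hookrightarrow L_{\Phi_*}(\R^N)$ together with $\Phi_*\in\Delta_2$ near infinity, $M:=\sup_n\int\Phi_*(|u_n|)\,\mathrm{d}x<\infty$; and the fact that $\Phi$ satisfies $\Delta_2$ by $(\varphi_3)$.

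Fix $\lambda>0$ and $\delta\in(0,1)$, to be shrunk later. On the \emph{high-level set} $\Omega_n:=\{|u_n|>\delta\}$, Chebyshev's inequality and (a) give $|\Omega_n|\le\Phi(\delta)^{-1}\int\Phi(|u_n|)\,\mathrm{d}x\to 0$. Since $\Psi\ll\Phi_*$, for every $\eps>0$ there is $T_\eps$ with $\Psi(r)\le\eps\Phi_*(r)$ for $r\ge T_\eps$; splitting $\Omega_n$ at the level $\lambda|u_n|=T_\eps$, the bounded part contributes at most $\Psi(T_\eps)\,|\Omega_n|\to 0$ and the unbounded part at most $\eps\int\Phi_*(\lambda|u_n|)\,\mathrm{d}x\le\eps C_\lambda M$ (the $\Delta_2$-condition on $\Phi_*$ absorbing $\lambda$). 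Letting $\eps\to 0$ yields $\int_{\Omega_n}\Psi(\lambda|u_n|)\,\mathrm{d}x\to 0$; this half of the argument is identical in cases (i) and (ii).

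It remains to handle the \emph{low-level set} $\{|u_n|\le\delta\}$. Under (i) I shrink $\delta$ so that $\lambda\delta$ is below the threshold in \eqref{mla1}; then \eqref{mla1} and $\Phi\in\Delta_2$ give $\Psi(\lambda|u_n|)\le C_\lambda\Phi(|u_n|)$ there, and the integral tends to $0$ by (a), finishing case (i). Under (ii) I shrink $\delta$ so that $\lambda\delta\le\widetilde R(1)$ and rewrite \eqref{mla3} through the substitution $\tau=\widetilde R(|t|^{1-b})$, i.e. $|t|=(\widetilde R^{-1}(\tau))^{1/(1-b)}$, obtaining $\Psi(\tau)\le C\,\Phi\big((\widetilde R^{-1}(\tau))^{1/(1-b)}\big)$ for $0\le\tau\le\widetilde R(1)$. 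I then apply this with $\tau=\lambda|u_n|$ and estimate $(\widetilde R^{-1}(\lambda|u_n|))^{1/(1-b)}$ using Young's inequality for the complementary pair $(R,\widetilde R)$ (namely $R^{-1}(s)\widetilde R^{-1}(s)\le 2s$) and the $\Delta_2$-properties, so as to split $\Phi$ of the resulting expression into one term controlled by $\Phi(|u_n|)$ — absorbed by (a) — and one term controlled by $\Phi_*(|u_n|)$ up to a factor that is small on $\{|u_n|\le\delta\}$, invoking once more the sub-level structure and the defining property $\Psi\circ R<\Phi_*$ of $R$ — absorbed by (b). Adding the low- and high-level estimates then gives $\int\Psi(\lambda|u_n|)\,\mathrm{d}x\to 0$ for all $\lambda>0$, hence $u_n\to 0$ in $L_\Psi(\R^N)$, which is the asserted compactness.

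The main obstacle is precisely the low-level estimate in case (ii): since $W^{s,\Phi}$, $L_\Phi$ and $L_{\Phi_*}$ are not linked by any interpolation inequality, one must reconcile the somewhat artificial exponent $1-b$ and the conjugate $N$-function $\widetilde R$ of \eqref{mla3} with the only two quantitative facts at hand, $\int\Phi(|u_n|)\to 0$ and $\sup_n\int\Phi_*(|u_n|)<\infty$. I expect the careful bookkeeping of these two modulars — rather than the truncation itself, which is routine once Theorem~\ref{comp1} and the embedding $X\hookrightarrow L_{\Phi_*}$ are available — to be where the real work lies; case (i) and the high-level set are comparatively standard.
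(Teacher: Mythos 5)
Your overall scheme --- reduce to $u_n\rightharpoonup 0$, get $\int\Phi(|u_n|)\,\mathrm{d}x\to 0$ from Theorem \ref{comp1}, bound the high-level set by $\Psi<<\Phi_{*}$ together with $\sup_n\int\Phi_{*}(|u_n|)\,\mathrm{d}x<\infty$, and handle the low-level set separately --- is exactly the paper's strategy, and your treatment of the high-level set and of case (i) is correct. In case (i) you are in fact slightly cleaner than the paper: the pointwise bound $\Psi(r)\le C\Phi(r)$ for $r$ small, extended to $[0,T]$ by the obvious constant $\Psi(T)/\Phi(\delta)$, already closes the argument, whereas the paper routes the same bound through a H\"older inequality with exponent $\frac{1}{1-a}$ that is ultimately dispensable. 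Your remark that one should verify $\int\Psi(\lambda|u_n|)\,\mathrm{d}x\to 0$ for every $\lambda>0$ so as to avoid $\Delta_2$ for $\Psi$ in case (i) is also a legitimate point of care.

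Case (ii), however, is where your write-up stops being a proof. You invert \eqref{mla3} to get $\Psi(\tau)\le C\Phi\bigl((\widetilde{R}^{-1}(\tau))^{1/(1-b)}\bigr)$ and then announce that Young's inequality for $(R,\widetilde{R})$ and ``$\Delta_2$-properties'' will split the right-hand side into a $\Phi(|u_n|)$ piece and a $\Phi_{*}(|u_n|)$ piece --- but you never exhibit this splitting, and your closing paragraph concedes that this is precisely where the real work lies. I do not see how that pointwise manipulation can succeed: the inequality $R^{-1}(s)\widetilde{R}^{-1}(s)\le 2s$ only bounds $\widetilde{R}^{-1}(\tau)$ by $2\tau/R^{-1}(\tau)$, and composing $\Phi$ with a power of that quotient does not decouple into a sum of the two modulars you control. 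The hypothesis \eqref{mla3} is not meant to be inverted; it is meant to be applied verbatim to $|u_n|^{1-b}$ inside a product. The paper's mechanism is the generalized H\"older inequality in Orlicz spaces for the complementary pair $(R,\widetilde{R})$ composed with $\Psi$ (Lemma \ref{interp11}, i.e.\ \cite[Theorem 1, p.~179]{rao}): writing $|u_n|=|u_n|^{b}\,|u_n|^{1-b}$ on $\{|u_n|<1\}$,
\[
\|u_n\chi_{_{[|u_n|<1]}}\|_{\Psi}\;\le\;\|(u_n\chi_{_{[|u_n|<1]}})^{b}\|_{\Psi\circ R}\;\|(u_n\chi_{_{[|u_n|<1]}})^{1-b}\|_{\Psi\circ \widetilde{R}}.
\]
The first factor is bounded because $R$ is built (Lemma \ref{Aux} and Remark \ref{rem-r}) so that $\Psi(R(t^{b}))\lesssim\Phi_{*}(t)$ for $t<1$, and $\sup_n\int\Phi_{*}(|u_n|)\,\mathrm{d}x<\infty$; the second factor tends to zero because \eqref{mla3} gives $\Psi(\widetilde{R}(|u_n|^{1-b}))\le C\Phi(|u_n|)$ and $\int\Phi(|u_n|)\,\mathrm{d}x\to 0$; finally $\Psi\in\Delta_2$ (assumed in (ii)) converts the resulting norm convergence back into modular convergence. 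Without this norm-level H\"older step --- or a genuine substitute for it --- your case (ii) is a plan, not a proof.
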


\begin{remark}
It is important to stress that in item $(i)$ for Theorem \ref{compact}  the limits \eqref{mla1} may not be zero. Furthermore, for item $(i)$ the $\Delta_2$ condition is not required for $\Psi$. This fact allows us to consider several $N$-functions $\Psi$. Notice also that assumption $\ell^{*}>m$ is used only in order to ensure the continuous embedding $X\hookrightarrow L_\Phi(\mathbb{R}^N)$. Here we refer the interested reader to \cite{Bahrouni-Emb} where some continuous embedding are proved. The item $(ii)$ for Theorem \ref{compact} is related with the fact that the interpolation law in our framework is not available. Hence we need to control the sequences in $W^{s,\Phi}(\mathbb{R}^N)$ proving that compact embedding for the fractional Orlicz-Sobolev spaces into the Orlicz spaces are verified.
\end{remark}

In order to prove the strong convergence of minimizing sequences in Part II, we prove the following result:
\begin{theorem}\label{homeo}
	Assume that $(\varphi_{1})$--$(\varphi_{3})$ and $(V_{0})$--$(V_{1})$ hold. Then the operator $(-\Delta_{\Phi})^{s}+V(x)\varphi(\cdot)$ is a homeomorphism.
\end{theorem}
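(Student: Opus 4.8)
Throughout, I read the operator as the map $\mathcal L:=(-\Delta_{\Phi})^{s}+V(x)\varphi(\cdot)$ acting from $X$ into its topological dual $X^{*}$ through
\[
\langle \mathcal L(u),v\rangle=\iint \varphi(|D_{s}u|)\,D_{s}u\, D_{s}v\,\frac{\mathrm{d}x\,\mathrm{d}y}{|x-y|^{N}}+\int V(x)\,\varphi(|u|)\,u\,v\;\mathrm{d}x ,\qquad u,v\in X .
\]
The plan is to run the classical monotone operator scheme: I would show that $\mathcal L$ is bounded and continuous, strictly monotone and coercive, conclude from the Browder--Minty theorem (applicable since $X$ is reflexive) that $\mathcal L$ is a bijection of $X$ onto $X^{*}$, and finally upgrade this to a homeomorphism by proving that $\mathcal L$ is of type $(S_{+})$. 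All the growth estimates below use only $(\varphi_{3})$, which is exactly the statement that $\Phi\in\Delta_{2}\cap\nabla_{2}$, and hence that $\widetilde\Phi\in\Delta_{2}$ as well.

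First I would verify that $\mathcal L$ is well defined and maps bounded sets to bounded sets: from $\widetilde\Phi(\varphi(|t|)|t|)\le (m-1)\Phi(|t|)$, a consequence of $(\varphi_{3})$, and Hölder's inequality in Orlicz spaces one gets $|\langle \mathcal L(u),v\rangle|\le C(\|u\|)\,\|v\|$. Continuity of $\mathcal L$ (hence demicontinuity, hence hemicontinuity) follows from the continuity of the Nemytskii operator $w\mapsto\varphi(|w|)w$ between the Orlicz spaces built on $\Phi$ and $\widetilde\Phi$, which is again granted by the $\Delta_{2}$ condition; I would apply this twice, once on the product space $\R^{N}\times\R^{N}$ equipped with the measure $|x-y|^{-N}\,\mathrm{d}x\,\mathrm{d}y$ for the nonlocal term, and once on $\R^{N}$ with the weight $V$ for the local term.

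For monotonicity I would note that, $\Phi$ being even, the scalar map $g(t):=\varphi(|t|)\,t$ is odd, and by $(\varphi_{2})$ it is strictly increasing on $(0,\infty)$, hence on all of $\R$. Therefore
\[
\langle \mathcal L(u)-\mathcal L(v),u-v\rangle=\iint\bigl(g(D_{s}u)-g(D_{s}v)\bigr)(D_{s}u-D_{s}v)\,\frac{\mathrm{d}x\,\mathrm{d}y}{|x-y|^{N}}+\int V(x)\bigl(g(u)-g(v)\bigr)(u-v)\;\mathrm{d}x\ge 0,
\]
and because $V\ge V_{0}>0$ the right-hand side vanishes only when $u=v$ a.e.; thus $\mathcal L$ is strictly monotone and, in particular, injective. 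Coercivity is immediate from the bound $\varphi(t)t^{2}\ge \ell\,\Phi(t)$ in $(\varphi_{3})$: it gives $\langle \mathcal L(u),u\rangle\ge \ell\,\rho(u)$, where $\rho(u):=\iint\Phi(|D_{s}u|)\,|x-y|^{-N}\,\mathrm{d}x\,\mathrm{d}y+\int V(x)\Phi(|u|)\;\mathrm{d}x$ is the modular attached to the norm of $X$, and the standard comparison between modular and norm under $\Phi\in\Delta_{2}\cap\nabla_{2}$ bounds $\rho(u)$ below by $c\|u\|^{\ell}$ once $\|u\|$ is large; since $\ell>1$ this yields $\langle \mathcal L(u),u\rangle/\|u\|\to\infty$ as $\|u\|\to\infty$. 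At this stage Browder--Minty gives surjectivity, so $\mathcal L$ is a continuous bijection of $X$ onto $X^{*}$.

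It remains to check that $\mathcal L^{-1}$ is continuous, which is where I expect the real difficulty. Given $f_{n}\to f$ in $X^{*}$, I set $u_{n}:=\mathcal L^{-1}(f_{n})$; coercivity makes $(u_{n})$ bounded, so along a subsequence $u_{n}\rightharpoonup\bar u$ in $X$ and $\langle \mathcal L(u_{n}),u_{n}-\bar u\rangle=\langle f_{n},u_{n}-\bar u\rangle\to0$. The crucial point is to show that $\mathcal L$ has the $(S_{+})$ property: this pairing going to zero together with monotonicity must force $u_{n}\to\bar u$ strongly in $X$; once this is established, continuity of $\mathcal L$ gives $\mathcal L(\bar u)=f$, hence $\bar u=\mathcal L^{-1}(f)$, and since the limit is independent of the subsequence one concludes $u_{n}\to\mathcal L^{-1}(f)$. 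The main obstacle is precisely proving $(S_{+})$: in the $p$-Laplacian case one invokes pointwise Simon-type inequalities and the Clarkson/interpolation structure of $L^{p}$, none of which is at hand here. Instead I would use the strict --- indeed uniform, under $\Phi\in\Delta_{2}\cap\nabla_{2}$ --- convexity of $\Phi$ to show that $\iint\bigl(g(D_{s}u_{n})-g(D_{s}\bar u)\bigr)(D_{s}u_{n}-D_{s}\bar u)\,|x-y|^{-N}\,\mathrm{d}x\,\mathrm{d}y\to0$, together with the analogous $V$-weighted quantity, forces convergence of $D_{s}u_{n}$ to $D_{s}\bar u$ (resp. of $u_{n}$ to $\bar u$) in the modular sense, and then I would pass from modular to norm convergence via $\Delta_{2}$. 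Carrying out this convexity and modular analysis in the nonlocal measure space, without the interpolation law, is the technical heart of the argument.
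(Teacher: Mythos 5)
Your overall scheme coincides with the paper's: both run Browder--Minty on the reflexive space $X$, establishing continuity, strict monotonicity and coercivity to get a continuous bijection $L:X\to X^*$, and then prove continuity of $L^{-1}$ separately. Your first four steps are sound and essentially equivalent to the paper's (the paper gets monotonicity from convexity of the modular functional $R(u)=\iint\Phi(|D_su|)\,\mathrm{d}\mu+\int V\Phi(u)\,\mathrm{d}x$ rather than from pointwise monotonicity of $g(t)=\varphi(|t|)t$, and gets coercivity from the same $\varphi(t)t^2\ge\ell\Phi(t)$ bound; these are interchangeable).

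The genuine gap is the last step, which you yourself label ``the technical heart'' and then do not carry out. You reduce continuity of $L^{-1}$ to an $(S_+)$ property and say you ``would use'' uniform convexity of $\Phi$ to pass from $\iint\bigl(g(D_su_n)-g(D_s\bar u)\bigr)(D_su_n-D_s\bar u)\,\mathrm{d}\mu\to0$ to modular convergence of $D_su_n$. That implication is not automatic: uniform convexity of $\Phi$ governs the geometry of $L_\Phi$, not the pointwise monotonicity pairing of $g$, and without a quantitative inequality of the type $\bigl(g(a)-g(b)\bigr)(a-b)\ge c\,\Phi(|a-b|)$ (which is false in general for Orlicz growth and is precisely the Simon-type estimate you note is unavailable) the step does not follow. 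The paper closes this gap by a different, more elementary route that you could adopt. Since $f_n\to f$ strongly and the limit candidate $u=L^{-1}(f)$ is already known from surjectivity, one has directly $0\le\langle L(u_n)-L(u),u_n-u\rangle=\langle f_n-f,u_n-u\rangle\le\|f_n-f\|_{X^*}\|u_n-u\|\to0$ by boundedness of $(u_n)$; no weak limit needs to be identified. The lower bound $V\ge V_0>0$ then forces $(\varphi(u_n)u_n-\varphi(u)u)(u_n-u)\to0$ in $L^1$, hence $u_n\to u$ a.e.\ by strict monotonicity of $g$; finally a generalized dominated convergence argument (with dominating sequences built from $\varphi(|D_su_n|)|D_su_n|^2$ and $V\varphi(u_n)u_n^2$, whose integrals converge because $\langle L(u_n),u_n\rangle=\langle f_n,u_n\rangle\to\langle f,u\rangle$) yields $\iint\Phi(|D_su_n-D_su|)\,\mathrm{d}\mu+\int V\Phi(|u_n-u|)\,\mathrm{d}x\to0$, and $\Delta_2$ converts this modular convergence into norm convergence. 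Until you either supply such an argument or prove the missing quantitative monotonicity inequality, your proof of the continuity of $L^{-1}$ is incomplete.
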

The proof of Theorem \ref{homeo} relies on the theory of monotone operators together with the Browder--Minty Theorem, see \cite{Zeidler}. Finally, we obtain a useful version of the ``vanishing'' Lions' Lemma to the fractional Orlicz--Sobolev framework.

 \begin{theorem}[Lions' Lemma type result]\label{lions}
	Suppose that $(\varphi_1)$--$(\varphi_3)$ hold and
	\begin{equation}\label{mla1b}
	\lim_{|t|\to 0}\frac{\Psi(t)}{\Phi(t)}=0.
	\end{equation}
   Let $(u_{n})$ be a bounded sequence in $W^{s,\Phi}(\mathbb{R}^{N})$ in such way that $u_n \rightharpoonup 0$ in $X$ and
	\begin{equation}\label{lionss}
	\lim_{n\rightarrow+\infty}\left[\sup_{y\in\mathbb{R}^{N}}\int_{B_{r}(y)}\Phi(u_{n})\,\mathrm{d}x \right]=0,
	\end{equation}
	for some $r>0$. Then, $u_{n}\rightarrow0$ in $L_{\Psi}(\mathbb{R}^{N})$, where $\Psi$ is an $N$-function such that $ \Psi<<\Phi^{*}$.
\end{theorem}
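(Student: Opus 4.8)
The plan is to reduce the statement to a single decay property of the super-level sets of $u_n$, after which everything becomes elementary. First I would note that $u_n\to 0$ in $L_{\Psi}(\mathbb{R}^{N})$ follows once we know that $\int_{\mathbb{R}^{N}}\Psi(\lambda u_n)\,\mathrm{d}x\to 0$ for every $\lambda>0$; and since $\Psi(\lambda\,\cdot)$ again satisfies \eqref{mla1b} and $\Psi(\lambda\,\cdot)<<\Phi^{*}$ (use $(\varphi_3)$, which gives $\Phi(\sigma t)\le\max\{\sigma^{\ell},\sigma^{m}\}\Phi(t)$, to control $\Phi(\lambda t)/\Phi(t)$ near $0$, and the meaning of $<<$ at infinity), it suffices to prove $\int_{\mathbb{R}^{N}}\Psi(u_n)\,\mathrm{d}x\to 0$. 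Splitting the domain according to the size of $|u_n|$, for $0<\delta<T$ one has
\[
\int_{\mathbb{R}^{N}}\Psi(u_n)\,\mathrm{d}x\le\Big(\sup_{0<|t|\le\delta}\tfrac{\Psi(t)}{\Phi(t)}\Big)\int_{\mathbb{R}^{N}}\Phi(u_n)\,\mathrm{d}x+\Psi(T)\,\big|\{|u_n|>\delta\}\big|+\int_{\{|u_n|>T\}}\Psi(u_n)\,\mathrm{d}x .
\]
By the continuous fractional Orlicz--Sobolev embedding $W^{s,\Phi}(\mathbb{R}^{N})\hookrightarrow L_{\Phi^{*}}(\mathbb{R}^{N})$ (see \cite{Bahrouni-Emb}) together with the $\Delta_2$-property of $\Phi$ and $\Phi^{*}$ (a consequence of $(\varphi_3)$, since $\ell,m\in(1,N)$), the quantities $\int\Phi(u_n)$ and $\int\Phi^{*}(u_n)$ are bounded; using \eqref{mla1b} for the first term and $\Psi<<\Phi^{*}$ (which yields $\Psi(t)\le\mu\Phi^{*}(t)$ for $|t|$ large) for the third, one makes each of them $<\varepsilon/3$ uniformly in $n$ by choosing first $\delta$ small and then $T$ large. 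Hence the theorem reduces to the

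\smallskip
\noindent\emph{Claim: for every $\delta>0$, $\big|\{x\in\mathbb{R}^{N}:|u_n(x)|>\delta\}\big|\to 0$ as $n\to\infty$.}

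\smallskip
This Claim is the crux, and it is where I would replace the missing interpolation law by a direct estimate on the Gagliardo modular at small scales. Argue by contradiction: passing to a subsequence, assume $|E_n|:=\big|\{|u_n|>\delta\}\big|\ge\alpha>0$. Set $v_n:=\min\{1,|u_n|/\delta\}$. Since $t\mapsto\min\{1,|t|/\delta\}$ is $\delta^{-1}$-Lipschitz, $|D_sv_n|\le\delta^{-1}|D_su_n|$, so by $(\varphi_3)$ the modular $\iint\Phi(|D_sv_n|)\,|x-y|^{-N}\,\mathrm{d}x\,\mathrm{d}y$ is at most $C_\delta$ times that of $u_n$, hence bounded; likewise $\Phi(v_n)\le C_\delta\Phi(u_n)$ pointwise, so $\omega_n:=\sup_{y}\int_{B_r(y)}\Phi(v_n)\,\mathrm{d}x\to 0$ by \eqref{lionss}. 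Now pick the scale $\rho_n:=\omega_n^{1/(2N)}\to 0$ (so $|B_{\rho_n}|=\kappa_N\omega_n^{1/2}\gg\omega_n$, and $\rho_n<r$ for $n$ large). A Chebyshev estimate on $B_{\rho_n}(x)$ gives $\big|\{v_n\ge\tfrac12\}\cap B_{\rho_n}(x)\big|\le\Phi(\tfrac12)^{-1}\omega_n$, hence $\big|\{v_n<\tfrac12\}\cap B_{\rho_n}(x)\big|\ge\tfrac12|B_{\rho_n}|$ for all $x$ and all $n$ large. If $x\in E_n$ then $v_n(x)=1$, so for $y\in\{v_n<\tfrac12\}\cap B_{\rho_n}(x)$ we have $|D_sv_n(x,y)|=\frac{1-v_n(y)}{|x-y|^s}>\frac{1}{2\rho_n^{s}}$ and $|x-y|^{-N}>\rho_n^{-N}$; consequently
\[
\int_{\mathbb{R}^{N}}\Phi\big(|D_sv_n(x,y)|\big)\,\frac{\mathrm{d}y}{|x-y|^{N}}\ \ge\ \Phi\!\Big(\tfrac{1}{2\rho_n^{s}}\Big)\,\rho_n^{-N}\cdot\tfrac12|B_{\rho_n}|\ =\ \tfrac{\kappa_N}{2}\,\Phi\!\Big(\tfrac{1}{2\rho_n^{s}}\Big)=:K_n\ \longrightarrow\ \infty,
\]
since $\rho_n\to 0$ and $\Phi(t)\to\infty$ as $t\to\infty$ by $(\varphi_1)$. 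Integrating this over $x\in E_n$ gives $\iint\Phi(|D_sv_n|)\,|x-y|^{-N}\,\mathrm{d}x\,\mathrm{d}y\ge K_n|E_n|\ge K_n\alpha\to\infty$, contradicting the boundedness obtained above. This proves the Claim, and with it the theorem.

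\smallskip
The step I expect to be the main obstacle is precisely the Claim: converting the local ``vanishing'' hypothesis \eqref{lionss} into genuine decay of $\big|\{|u_n|>\delta\}\big|$ \emph{without} an interpolation inequality between $L_{\Phi}$ and $W^{s,\Phi}$. The remaining ingredients---the trichotomy above, the continuous embedding into $L_{\Phi^{*}}$, and the $\Delta_2$-type inequalities coming from $(\varphi_3)$---are routine. The delicate point inside the Claim is the calibration of the scale $\rho_n$: it must tend to $0$ so that the singular factor $\Phi(1/(2\rho_n^{s}))$ blows up, yet slowly enough relative to the vanishing rate $\omega_n$ (concretely $\rho_n^{N}/\omega_n\to\infty$ and $\rho_n<r$) that each ball $B_{\rho_n}(x)$ still captures a fixed proportion of the set $\{v_n<1/2\}$; the choice $\rho_n=\omega_n^{1/(2N)}$ does the job. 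Note that in this route only the boundedness of $(u_n)$ in $W^{s,\Phi}(\mathbb{R}^{N})$ and \eqref{lionss} are used, the weak convergence $u_n\rightharpoonup 0$ in $X$ entering only implicitly.
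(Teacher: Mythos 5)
Your proposal is correct, and at the decisive step it takes a genuinely different route from the paper. The outer structure is the same in both: the trichotomy $\{|u_n|\le\delta\}$, $\{\delta<|u_n|<T\}$, $\{|u_n|\ge T\}$, with the small range handled via \eqref{mla1b} and the boundedness of $\int\Phi(u_n)\,\mathrm{d}x$, and the large range via $\Psi<<\Phi_{*}$ together with the continuous embedding into $L_{\Phi_*}(\mathbb{R}^N)$; your preliminary reduction to $\int\Psi(\lambda u_n)\,\mathrm{d}x\to0$ for every $\lambda>0$ is an extra care the paper omits (the paper passes from modular to norm convergence without assuming $\Psi\in\Delta_2$). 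The difference lies in how one shows that the middle set has vanishing measure. The paper runs a concentration dichotomy: if $|\{\delta<|u_n|<T\}|\not\to0$, a covering argument produces a single ball $B_r(y_0)$ retaining measure $\sigma>0$ of that set, and then Chebyshev ($\Phi(u_n)\ge\Phi(\delta)$ there) contradicts \eqref{lionss}. That route uses only \eqref{lionss} and the $L_\Phi$ bound, and its covering step is delicate, since the bound $|\{\delta<|u_n|<T\}\cap B_r(y)|<\varepsilon 2^{-k}$ is invoked uniformly in $y$ and simultaneously for all $k$ at a fixed $n$, which the negation of the concentration claim does not directly supply; indeed, local vanishing plus $L_\Phi$-boundedness alone cannot force $|\{|u_n|>\delta\}|\to0$ (many small, far-separated bumps of fixed total superlevel measure satisfy both), and what excludes such configurations is precisely the boundedness of the Gagliardo modular. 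Your truncation-and-rescaling argument — bounding $\iint\Phi(|D_sv_n|)\,\mathrm{d}\mu$ from below by $\tfrac{\kappa_N}{2}\Phi\bigl(\tfrac12\rho_n^{-s}\bigr)\,|E_n|$ with $\rho_n=\omega_n^{1/(2N)}$ — uses that bound in an essential and correct way, and the calibration of $\rho_n$ checks out: $\rho_n\to0$ makes $\Phi\bigl(\tfrac12\rho_n^{-s}\bigr)\to\infty$, while $\rho_n^{N}/\omega_n\to\infty$ guarantees, uniformly in $x$, that $B_{\rho_n}(x)$ contains at least half its volume of $\{v_n<\tfrac12\}$. In short, your version is self-contained, quantitative, and arguably repairs the one fragile step of the paper's argument; the price is the introduction of the auxiliary truncation $v_n$ and the scale $\rho_n$, which the paper's softer dichotomy avoids.
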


Regarding to Part II, it is worthwhile to mention that $u \in W^{s,\Phi}(\mathbb{R}^N)$ is said to be a weak solution for the Problem \eqref{p1} whenever
\begin{equation*}
\iint\varphi\left(|D_s u|\right)|D_s u|  D_s h \,\mathrm{d}\mu+\int V(x)\varphi(|u|)uh\;\mathrm{d}x-\int f(x,u)h\;\mathrm{d}x=0, \quad \mbox{for all } h \in W^{s,\Phi}(\mathbb{R}^N),
\end{equation*}
where $\mathrm{d}\mu(x,y):=\frac{ \mathrm{d}x\mathrm{d}y}{|x-y|^N}$. In order to use a variational approach to obtain solutions, we suppose that $f\in C^{1}(\mathbb{R}^{N}\times\mathbb{R},\mathbb{R})$ satisfies the following assumptions:

\begin{enumerate}[label=($f_0$),ref=$(f_0)$]
	\item \label{f0}
	 there exist a function $\psi:[0,+\infty)\rightarrow[0,+\infty)$ and a constant $C>0$ such that
	  \[
	   |f(x,t)|\leq C\psi(t)t, \quad \mbox{for all} \hspace{0,2cm} (x,t)\in\mathbb{R}^{N}\times [0,+\infty),
	  \]
	 where $\Psi(t)=\int_{0}^{t}\psi(s)\;\mathrm{d}s$ is an N-function satisfying $\Psi<<\Phi_{*}$ (for definitions and properties see Section 2 ahead) and
	  \begin{equation}\label{psi1}
	   1<\ell\leq m<\ell_{\Psi}:=\inf_{t>0}\frac{t\psi(t)}{\Psi(t)}\leq \sup_{t>0}\frac{t\psi(t)}{\Psi(t)}=:m_{\Psi}<\ell^{*}:=\frac{\ell N}{N-\ell}.
	  \end{equation}
\end{enumerate}
\begin{enumerate}[label=($f_1$),ref=$(f_1)$]
	\item \label{f1}
	 $\displaystyle\lim_{|t|\rightarrow0}\frac{f(x,t)}{t\varphi(t)}=0$, uniformly in $x\in\mathbb{R}^{N}$.
\end{enumerate}

\begin{enumerate}[label=($f_2$),ref=$(f_2)$]
	\item \label{f3}
	$\displaystyle\lim_{|t|\rightarrow+\infty}\frac{f(x,t)}{|t|^{m-1}}=+\infty$, uniformly in $x\in\mathbb{R}^{N}$.
\end{enumerate}
\begin{enumerate}[label=($f_3$),ref=$(f_3)$]
	\item \label{f4}
	the map $t\mapsto\displaystyle\frac{f(x,t)}{|t|^{m-2}t}$ is strictly increasing for $t>0$ and strictly decreasing for $t<0$.
\end{enumerate}
\begin{enumerate}[label=($f_4$),ref=$(f_4)$]
	\item \label{f5}
	There exists $\theta>m$ such that $\theta F(x,t):=\theta\int_{0}^{t}f(x,s)\,\mathrm{d}s\leq t f(x,t)$, for $(x,t)\in\R^N\times\R.$
\end{enumerate}

\begin{remark}
	We point out that assumption \ref{f0} implies that $$\displaystyle\lim_{|t|\rightarrow+\infty}\frac{f(x,t)}{t\varphi_*(t)}=0$$
	holds uniformly in $x\in\mathbb{R}^{N}$ where $t\varphi_*(t)$ is an N-function. In fact, this function  is the Sobolev conjugate function $\Phi_*$ of $\Phi$, see Section \ref{s0}. In particular, we have that $\Phi_*(t)=\int_{0}^{t}\tau\varphi_*(\tau)\,\mathrm{d}\tau$ holds for each $t \geq 0$.
\end{remark}

It is important to emphasize that hypothesis $(V_1)$ is related to the existence of compact embedding for the Orlicz-Sobolev spaces into Orlicz spaces.
 As a consequence, we obtain that
 \begin{equation}\label{lambda1}
 \lambda_{1}:=\inf_{u\in X\backslash\{0\}}\left\{\frac{\displaystyle\iint \Phi\left(\frac{|u(x)-u(y)|}{|x-y|^{s}}\right)\frac{\mathrm{d}x\mathrm{d}y}{|x-y|^{N}}+\int V(x)\Phi(|u|)\;\mathrm{d}x}{\displaystyle\int \Phi(|u|)\;\mathrm{d}x}\right\}>0.
\end{equation}
 In fact, by using the compact embedding proved in Theorem \ref{comp1} or Theorem \ref{lions}, taking any minimizer sequence for \eqref{lambda1} we prove that $\lambda_ 1$ is attained. In particular, we know that $\lambda_1 > 0$. On this subject for bounded domains we refer the interested reader to \cite{Salort,Salort-1}.

In order to apply the Nehari method in our framework, we also suppose the following hypothesis:

\begin{itemize}
	\item[($\varphi_4$)] There holds
	\begin{equation}\label{emj2}
	\ell-2\leq\frac{t\varphi^{\prime}(t)}{\varphi(t)}\leq m-2<\ell^*-2.
	\end{equation}
\end{itemize}
We point out that \eqref{emj2} implies that hypothesis $(\varphi_3)$ is satisfied.

Now, we are in a condition to state our first result of Part II.

\begin{theorem}[Unbounded potential] \label{A} Assume that the function $\varphi$ satisfies $(\varphi_1),(\varphi_2)$ and $(\varphi_4)$. Suppose that $V(x)$ verifies $(V_0),(V_1)$ and $f$ satisfies hypotheses \ref{f0}--\ref{f5}. Then, Problem \eqref{p1} admits at least one ground state solution $u \in W^{s,\Phi}(\mathbb{R}^N)$.	
\end{theorem}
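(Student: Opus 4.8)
The plan is to obtain the ground state via the Nehari manifold method applied to the energy functional $I:X\to\mathbb{R}$ associated with \eqref{p1},
$$I(u)=J(u)-\int F(x,u)\,\mathrm{d}x,\qquad J(u):=\iint\Phi(|D_su|)\,\mathrm{d}\mu+\int V(x)\Phi(|u|)\,\mathrm{d}x .$$
First I would verify, using $(\varphi_1)$–$(\varphi_3)$, $(f_0)$–$(f_1)$ and the continuous embedding $X\hookrightarrow L_\Psi(\mathbb{R}^N)$ (valid since $\Psi<<\Phi_*$), that $I\in C^1(X,\mathbb{R})$ and that its critical points are exactly the weak solutions of \eqref{p1}. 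Then set $\mathcal N=\{u\in X\setminus\{0\}:\langle I'(u),u\rangle=0\}$ and $c=\inf_{\mathcal N}I$.

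The next step is the fibering analysis. For $u\in X\setminus\{0\}$ write $\langle I'(tu),tu\rangle=a(t)-b(t)$, $t>0$, where $a$ collects the modular terms and $b(t)=\int f(x,tu)tu\,\mathrm{d}x$. Hypothesis $(\varphi_4)$ (which implies $(\varphi_3)$) makes $t\mapsto a(t)/t^{m}$ nonincreasing, while $(f_3)$ makes $t\mapsto b(t)/t^{m}$ strictly increasing; together with the behaviour of $b/a$ near $0$ and near $\infty$ coming from $(f_1)$ and $(f_2)$, an intermediate value argument yields for each such $u$ a unique $t_u>0$ with $t_uu\in\mathcal N$, at which $t\mapsto I(tu)$ attains its global maximum. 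Hence $\mathcal N\neq\emptyset$, it is a $C^1$ natural constraint, and $c=\inf_{u\neq0}\max_{t>0}I(tu)$. Using $(\varphi_3)$, $(f_0)$, $(f_1)$ and \eqref{psi1} together with the modular–norm inequalities one shows $\|u\|$ is bounded away from $0$ on $\mathcal N$, and then $(f_4)$ (recall $\theta>m$) gives $I(u)\ge(1-m/\theta)J(u)\ge\mathrm{const}>0$, so $c>0$. For a minimizing sequence $(u_n)\subset\mathcal N$ with $I(u_n)\to c$, the identity $I(u_n)=I(u_n)-\tfrac1\theta\langle I'(u_n),u_n\rangle$ combined with $(\varphi_3)$ and $(f_4)$ yields $(1-m/\theta)J(u_n)\le c+o(1)$, so the modular of $(u_n)$ is bounded, hence $(u_n)$ is bounded in the reflexive space $X$.

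Up to a subsequence $u_n\rightharpoonup u$ in $X$. Here I invoke Part I: Theorem \ref{comp1} gives $u_n\to u$ in $L_\Phi(\mathbb{R}^N)$; since \eqref{psi1} forces $\Phi<\Psi<<\Phi_*$ and, for $|t|\le1$, $\Psi(t)/\Phi(t)\lesssim|t|^{\ell_\Psi-m}\to0$, condition $(i)$ of Theorem \ref{compact} holds and $u_n\to u$ in $L_\Psi(\mathbb{R}^N)$ as well. Consequently, by $(f_0)$ and $\Psi\in\Delta_2$, one gets $f(\cdot,u_n)\to f(\cdot,u)$ in $X^*$. Applying Ekeland's variational principle to $I|_{\mathcal N}$ and using that $\mathcal N$ is a natural constraint, we may assume $I'(u_n)\to0$ in $X^*$, i.e. $\big((-\Delta_{\Phi})^{s}+V(x)\varphi(\cdot)\big)u_n=f(\cdot,u_n)+o(1)\to f(\cdot,u)$ in $X^*$. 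Theorem \ref{homeo} then shows the operator $(-\Delta_{\Phi})^{s}+V(x)\varphi(\cdot)$ is a homeomorphism, so $(u_n)$ converges strongly in $X$; the strong limit must be $u$, and $u$ solves $\big((-\Delta_{\Phi})^{s}+V(x)\varphi(\cdot)\big)u=f(\cdot,u)$, i.e. $u$ is a weak solution of \eqref{p1}. If $u=0$, then $u_n\to0$ in $L_\Psi$, which by the estimates of the previous paragraph forces $J(u_n)\to0$ and hence $I(u_n)\to0$, contradicting $c>0$; therefore $u\neq0$, so $u\in\mathcal N$ and $I(u)=\lim I(u_n)=c$. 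Since every nonzero critical point of $I$ lies on $\mathcal N$, $u$ is a ground state solution.

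The main obstacle is twofold. First, because no homogeneity is available, the uniqueness of the Nehari projection and the fact that $\mathcal N$ is a $C^1$ natural constraint must be extracted solely from the index inequalities in $(\varphi_4)$ and the monotonicity in $(f_3)$, which requires careful bookkeeping with the $N$-functions $\Phi,\Psi$ and the associated modular inequalities. Second, recovering strong convergence of the minimizing sequence is nontrivial in this nonlocal Orlicz setting: the argument leans essentially on $(V_0)$–$(V_1)$ through the compact embeddings of Part I to neutralize the noncompact lower-order term, and on Theorem \ref{homeo} to invert the principal part — alternatively one could establish an $(S_+)$-type property for $(-\Delta_{\Phi})^{s}+V(x)\varphi(\cdot)$ directly, but the homeomorphism route is cleaner.
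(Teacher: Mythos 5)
Your proposal is correct and follows essentially the same route as the paper: Nehari minimization with coercivity giving a bounded minimizing sequence, the Part~I compact embeddings (Theorems \ref{comp1} and \ref{compact}) yielding $u_n\to u$ in $L_\Psi(\mathbb{R}^N)$ and hence $f(\cdot,u_n)\to f(\cdot,u)$, and Theorem \ref{homeo} to invert $(-\Delta_\Phi)^s+V(x)\varphi(\cdot)$ and upgrade to strong convergence in $X$. Your explicit nontriviality argument (ruling out $u=0$ via $c>0$) and the use of Ekeland to produce the $(PS)_{c_{\mathcal N}}$ sequence are minor refinements of points the paper leaves implicit, and if anything your insistence on norm convergence of $L(u_n)$ in $X^*$ before applying the homeomorphism is the cleaner formulation.
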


Now we shall study the case when $V(x)$ is bounded. For this purpose, we consider the following hypothesis:
\begin{enumerate}[label=($V_2$),ref=$(V_2)$]
	\item \label{(V_2)}
Assume that $x \mapsto V(x)$ and $x \mapsto f(x,u)$ are $1$-periodic functions.
\end{enumerate}

Finally, we state the existence result regarding bounded potential.

\begin{theorem} [Bounded potential] \label{B}
Assume that the function $\varphi$ satisfies $(\varphi_1),(\varphi_2)$ and $(\varphi_4)$. Suppose that $V(x)$ verifies $(V_0), (V_2)$ and $f$ satisfies hypotheses \ref{f0}-\ref{f5}. Then, Problem \eqref{p1} admits at least one ground state solution $u \in W^{s,\Phi}(\mathbb{R}^N)$.	
\end{theorem}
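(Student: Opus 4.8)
The plan is to use the concentration-compactness dichotomy in the periodic setting. Since $V$ is bounded and satisfies $(V_0)$ together with the $1$-periodicity assumption \ref{(V_2)}, the natural working space is $W^{s,\Phi}(\mathbb{R}^N)$ itself (the weighted norm is equivalent to the usual one), and the energy functional
\[
I(u)=\iint \Phi(|D_s u|)\,\mathrm{d}\mu+\int V(x)\Phi(|u|)\,\mathrm{d}x-\int F(x,u)\,\mathrm{d}x
\]
is $\mathbb{Z}^N$-translation invariant. First I would establish the mountain pass geometry: using \ref{f1} and $(\varphi_1)$ one gets that $0$ is a strict local minimum, while \ref{f4} (the Ambrosetti--Rabinowitz condition with $\theta>m$) gives the existence of $e$ with $I(e)<0$ and forces Cerami/Palais--Smale sequences to be bounded in $W^{s,\Phi}(\mathbb{R}^N)$ via the standard $\theta$-trick combined with $(\varphi_3)$ (which follows from $(\varphi_4)$). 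Equivalently one works directly on the Nehari manifold $\mathcal{N}$; here $(\varphi_4)$ and \ref{f5} guarantee $\mathcal{N}$ is a natural constraint diffeomorphic to the unit sphere, that $c:=\inf_{\mathcal{N}}I>0$ equals the mountain pass level, and that a minimizing sequence $(u_n)$ for $c$ on $\mathcal{N}$ is a bounded (PS)$_c$ sequence.

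Next I would run the dichotomy argument on $(u_n)$. Two cases: either $(u_n)$ vanishes in the sense of \eqref{lionss}, or it does not. If vanishing holds, then by the Lions' type Lemma (Theorem \ref{lions}) — applicable because \ref{f0} guarantees $\Psi<<\Phi_*$ and \ref{f1} gives $\lim_{|t|\to 0}\Psi(t)/\Phi(t)=0$, so \eqref{mla1b} is met — one concludes $u_n\to 0$ in $L_\Psi(\mathbb{R}^N)$, hence by \ref{f0} and \ref{f1} that $\int f(x,u_n)u_n\,\mathrm{d}x\to 0$ and $\int F(x,u_n)\,\mathrm{d}x\to 0$; feeding this back into $I'(u_n)u_n\to 0$ forces $[u_n]_{s,\Phi}\to 0$ and $\|u_n\|_{V,\Phi}\to 0$, whence $c=\lim I(u_n)=0$, contradicting $c>0$. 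Therefore vanishing cannot occur: there exist $\delta>0$, $r>0$ and points $y_n\in\mathbb{R}^N$ with $\int_{B_r(y_n)}\Phi(u_n)\,\mathrm{d}x\geq\delta$. Replacing $y_n$ by the nearest lattice point in $\mathbb{Z}^N$ and setting $\tilde u_n:=u_n(\cdot-k_n)$ with $k_n\in\mathbb{Z}^N$, the periodicity of $V$ and $f$ gives $I(\tilde u_n)=I(u_n)$ and $I'(\tilde u_n)=0$ on $\mathcal{N}$ in the limit, while $(\tilde u_n)$ is still bounded and, along a subsequence, $\tilde u_n\rightharpoonup u$ in $W^{s,\Phi}(\mathbb{R}^N)$ with $u\neq 0$ (the non-vanishing lower bound survives weak convergence after passing to a further subsequence using local compact embeddings on $B_{r+1}(0)$).

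Then I would identify $u$ as a ground state. Using the monotone-operator / homeomorphism structure from Theorem \ref{homeo} together with the weak-to-weak$^*$ continuity of the operator and the a.e. convergence $\tilde u_n\to u$, one passes to the limit in the weak formulation to get $I'(u)=0$, so $u\in\mathcal{N}$ and thus $I(u)\geq c$. For the reverse inequality I would use the convexity of $\Phi$ together with Brezis--Lieb type splitting (or directly Fatou's lemma applied to the nonnegative quantity $I(u_n)-\tfrac1\theta I'(u_n)u_n$, which is legitimate thanks to \ref{f5}) to obtain $I(u)\leq\liminf I(\tilde u_n)=c$. Hence $I(u)=c$ and $u$ is a ground state solution of \eqref{p1}.

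The main obstacle I anticipate is the non-vanishing-implies-nontrivial-limit step combined with passing to the limit in the nonlinear nonlocal term: without an interpolation inequality and without the growth flexibility of power nonlinearities, one must carefully combine the local compact embedding $W^{s,\Phi}(B_R)\hookrightarrow\hookrightarrow L_\Phi(B_R)$, the Orlicz-space dominated convergence governed by \ref{f0}, and the a.e. convergence of the difference quotients $D_s\tilde u_n$ (which requires an argument in the measure space $(\mathbb{R}^N\times\mathbb{R}^N,\mathrm{d}\mu)$ rather than on $\mathbb{R}^N$) to justify $\int f(x,\tilde u_n)h\,\mathrm{d}x\to\int f(x,u)h\,\mathrm{d}x$ and the convergence of the principal-value bilinear form against test functions $h$. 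This is exactly where the ``fine analysis on the behavior of minimizing sequences'' mentioned in the introduction is needed, and the periodicity \ref{(V_2)} is essential to recenter the mass before this analysis can be carried out.
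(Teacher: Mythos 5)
Your proposal is correct and follows essentially the same route as the paper: minimization on the Nehari manifold, boundedness of the minimizing sequence, the vanishing/non-vanishing dichotomy resolved by the Lions-type lemma (Theorem \ref{lions}) together with $c_{\mathcal{N}}>0$, and recentering by integer translations using the periodicity \ref{(V_2)} to obtain a nonzero weak limit which is then identified as a ground state. In fact your write-up supplies more detail than the paper's own (quite terse) argument, particularly on why the weak limit attains the level $c_{\mathcal{N}}$; the only small slips are attributions (the condition \eqref{mla1b} follows from \eqref{psi1} in \ref{f0} rather than from \ref{f1}, and the Ambrosetti--Rabinowitz condition is \ref{f5}, not \ref{f4}), which do not affect the argument.
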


It is worthwhile to mention that for $\Phi(t)=|t|^2(\ln(1+|t|))^q,~ 2+q<2^*$ we deduce that embedding $X\hookrightarrow L_\Psi(\mathbb{R}^N)$ is compact for each $\Psi$ such that $\Psi << \Phi^*$. Similarly, we apply Theorem \ref{lions} proving that $u_{n}\rightarrow0$ in $L_{\Psi}(\mathbb{R}^{N})$ for all $N$-function $\Psi$ with $\Psi < < \Phi^*$ such that \eqref{mla1b} is satisfied. Notice also that the Orlicz space given by the $N$-function $\Phi(t)=|t|^2(\ln(1+|t|))^q$ does not coincide with any Lebesgue space $L^p(\mathbb{R}^N)$ with $p \in [1, \infty)$. Hence for our setting Lebesgue spaces $L^p(\mathbb{R}^N)$ are not sufficient. Furthermore, considering the fractional $\Phi$-Laplace operator defined in the whole space $\mathbb{R}^N$, we can prove our main results taking into account the Orlicz-Sobolev framework.

The remainder of this work is organized as follows: In the forthcoming Section we introduce some preliminary concepts and auxiliary results that will be useful throughout the work. In Section \ref{s3} we give the proofs of the abstract results. In Section \ref{s4} we prove the existence results regarding Problem \eqref{p1}.

\section{Preliminary results}

\subsection{Fractional Orlicz--Sobolev spaces}\label{s0}

In this section we introduce some preliminary concepts about fractional Orlicz-Sobolev spaces. For a more complete discuss about this subject we refer the readers to \cite{Bonder}.

Let $\Phi:\mathbb{R}\rightarrow[0,+\infty)$ be convex and continuous. We recall that $\Phi$ is an $N$-function if satisfies the following conditions:
\begin{itemize}
	\item[(i)] $\Phi$ is even;
	\item[(ii)] $\displaystyle\lim_{t\rightarrow 0}\frac{\Phi(t)}{t}=0$;
	\item[(iii)] $\displaystyle\lim_{t\rightarrow \infty}\frac{\Phi(t)}{t}=\infty$;
	\item[(iv)] $\Phi(t)>0$, for all $t>0$.	
\end{itemize}
Notice that by using assumptions $(\varphi_1)$ and $(\varphi_2)$ we conclude that $\Phi$ is an $N$-function. We say that an $N$-function satisfies the $\Delta_{2}$-condition if there exists $K>0$ such that
 \[
  \Phi(2t)\leq K\Phi(t), \quad \mbox{for all} \hspace{0,2cm} t\geq0.
 \]

Throughout the work we use the following notation:
 $$
 \xi^{-}(t)=\min\{t^\ell,t^m\} \quad \mbox{and} \quad \xi^{+}(t)=\max\{t^\ell,t^m\}, \quad t\geq 0.
 $$
We recall the following Lemma due to N. Fukagai et al. \cite{Fuk_1} which can be written in the following form:
\begin{proposition}\label{lema_naru}
	Assume that $(\varphi_1)-(\varphi_3)$ hold. Then, $\Phi$ satisfies the following estimates:
	$$
	\xi^{-}(t)\Phi(\rho)\leq\Phi(\rho t)\leq \xi^{+}(t)\Phi(\rho), \quad \rho, t> 0,
	$$
	$$
	\xi^{-}(\|u\|_{\Phi})\leq\int_\Omega\Phi(u)\,\mathrm{d}x\leq \xi^{+}(\|u\|_{\Phi}), \quad u\in L_{\Phi}(\Omega).
	$$
\end{proposition}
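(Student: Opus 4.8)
\textbf{Proof plan for Proposition \ref{lema_naru}.}

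The plan is to prove the two claimed pairs of inequalities in turn, the first (pointwise/modular estimate on $\Phi(\rho t)$) by integrating the differential inequality encoded in $(\varphi_3)$, and the second (estimate relating $\int_\Omega\Phi(u)\,\mathrm{d}x$ to $\|u\|_\Phi$) as a formal consequence of the first together with the definition of the Luxemburg norm. First I would record the elementary consequence of $(\varphi_3)$ that for all $\tau>0$,
\[
\ell\le\frac{\tau^2\varphi(\tau)}{\Phi(\tau)}=\frac{\tau\,\Phi'(\tau)}{\Phi(\tau)}\le m,
\]
using $\Phi'(\tau)=\tau\varphi(\tau)$ for $\tau>0$, which holds by the very definition $\Phi(t)=\int_0^t s\varphi(s)\,\mathrm{d}s$ (and $\Phi>0$ on $(0,\infty)$ by $(\varphi_1)$–$(\varphi_2)$, so the quotient is well defined).

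\medskip

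\textbf{Step 1: the pointwise estimate.} Fix $\rho>0$ and define $g(t)=\Phi(\rho t)$ for $t>0$. Then $t\,g'(t)/g(t)=(\rho t)\Phi'(\rho t)/\Phi(\rho t)\in[\ell,m]$ by the inequality above applied at $\tau=\rho t$. Equivalently, $\frac{\ell}{t}\le\frac{\mathrm{d}}{\mathrm{d}t}\ln g(t)\le\frac{m}{t}$. Now I integrate this on the interval with endpoints $1$ and $t$. If $t\ge 1$, integrating from $1$ to $t$ gives $\ell\ln t\le \ln g(t)-\ln g(1)\le m\ln t$, i.e. $t^\ell\,\Phi(\rho)\le\Phi(\rho t)\le t^m\,\Phi(\rho)$; since $t\ge1$ means $\min\{t^\ell,t^m\}=t^\ell$ and $\max\{t^\ell,t^m\}=t^m$ (recall $1<\ell\le m$), this is exactly $\xi^-(t)\Phi(\rho)\le\Phi(\rho t)\le\xi^+(t)\Phi(\rho)$. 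If $0<t<1$, integrating from $t$ to $1$ gives $\ell\ln(1/t)\le\ln g(1)-\ln g(t)\le m\ln(1/t)$, which rearranges to $t^m\Phi(\rho)\le\Phi(\rho t)\le t^\ell\Phi(\rho)$; and for $t<1$ we have $\min\{t^\ell,t^m\}=t^m$, $\max\{t^\ell,t^m\}=t^\ell$, so again we obtain $\xi^-(t)\Phi(\rho)\le\Phi(\rho t)\le\xi^+(t)\Phi(\rho)$. The cases $t=1$ or $\rho=0$ (or $u=0$) are trivial. This proves the first display.

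\medskip

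\textbf{Step 2: the modular estimate.} Let $u\in L_\Phi(\Omega)$, $u\neq 0$, and write $\lambda=\|u\|_\Phi$. By definition of the Luxemburg norm and continuity of the modular (using the $\Delta_2$-condition, which by $(\varphi_3)$ holds with $m<\ell^*$, so the infimum is attained), one has $\int_\Omega\Phi(u/\lambda)\,\mathrm{d}x=1$. Applying Step 1 pointwise with $\rho=u(x)/\lambda$ and $t=\lambda$ yields $\xi^-(\lambda)\,\Phi(u(x)/\lambda)\le\Phi(u(x))\le\xi^+(\lambda)\,\Phi(u(x)/\lambda)$ for a.e.\ $x$. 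Integrating over $\Omega$ and using $\int_\Omega\Phi(u/\lambda)\,\mathrm{d}x=1$ gives
\[
\xi^-(\|u\|_\Phi)\le\int_\Omega\Phi(u)\,\mathrm{d}x\le\xi^+(\|u\|_\Phi),
\]
which is the second display. The case $u=0$ is immediate.

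\medskip

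\textbf{Main obstacle.} The analytic content is entirely in Step 1; everything else is bookkeeping. The point requiring a little care is the differentiation identity $\Phi'(\tau)=\tau\varphi(\tau)$ and the attainment $\int_\Omega\Phi(u/\|u\|_\Phi)\,\mathrm{d}x=1$ in Step 2, which needs the $\Delta_2$-condition; I would cite the standard fact that $(\varphi_3)$ with $m<\infty$ forces $\Phi\in\Delta_2$ (and $\widetilde\Phi\in\Delta_2$ as well, from $\ell>1$), so the Luxemburg norm behaves as expected. No genuine difficulty is anticipated beyond making these two observations explicit.
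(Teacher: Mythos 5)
Your proposal is correct: the paper itself gives no proof of this proposition, merely recalling it from Fukagai--Ito--Narukawa \cite{Fuk_1}, and your argument (integrating $\ell/t\le (\ln\Phi(\rho t))'\le m/t$ from $(\varphi_3)$, then combining the pointwise estimate with the Luxemburg-norm normalization $\int_\Omega\Phi(u/\|u\|_\Phi)\,\mathrm{d}x=1$, valid since $(\varphi_3)$ forces $\Phi\in\Delta_2$) is exactly the standard proof in that reference. Your attention to the $\Delta_2$/attainment point, which is the only place where care is needed (the upper bound needs only $\int_\Omega\Phi(u/\|u\|_\Phi)\,\mathrm{d}x\le 1$, the lower bound needs equality), is appropriate.
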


We consider the spaces
\begin{align*}
&L_\Phi(\Omega) :=\left\{ u\colon \Omega \to \R \text{ measurable  such that }  \rho_{\Phi,\Omega}(u) < \infty \right\},\\
&W^{s,\Phi}(\Omega):=\left\{ u\in L_\Phi(\Omega) \text{ such that } \rho_{s,\Phi,\R^N}(u)<\infty \right\},\\
&W^{s,\Phi}_0(\Omega):=\left\{ u\in W^{s,\Phi}(\R^n) :\ u=0\ a.e.\ \text{in}\ \R^N\setminus\Omega\right\},
\end{align*}
where the modulars $\rho_{\Phi,\Omega}$ and $\rho_{s,\Phi}$ are defined as
\begin{align*}
&\rho_{\Phi,\Omega}(u):=\int_{\Omega} \Phi(|u(x)|)\,\mathrm{d}x,\\
&\rho_{s,\Phi,\R^N}(u):=
  \int_{\R^N\times\R^N} \Phi( |D_su(x,y)|)  \,\mathrm{d}\mu,
\end{align*}
and $\mathrm{d}\mu(x,y):=\frac{ \mathrm{d}x\mathrm{d}y}{|x-y|^N}$.
These spaces are endowed with the so-called \emph{Luxemburg norms}
\begin{align*}
&\|u\|_{L_\Phi(\Omega)} := \inf\left\{\lambda>0\colon \rho_{\Phi,\Omega}\left(\frac{u}{\lambda}\right)\le 1\right\},\\
&\|u\|_{W^{s,\Phi}(\Omega)} := \|u\|_{L^\Phi(\Omega)} + [u]_{W^{s,\Phi}(\R^N)},
\end{align*}
where the  {\em $(s,\Phi)$-Gagliardo semi-norm} is defined as
\begin{align*}
&[u]_{W^{s,\Phi}(\R^N)} :=\inf\left\{\lambda>0\colon \rho_{s,\Phi,\R^N}\left(\frac{u}{\lambda}\right)\le 1\right\}.
\end{align*}
Since we assume $(\varphi_3)$, $W^{s,\Phi}(\Omega)$ is a reflexive Banach space. Moreover $C_c^\infty(\mathbb{R}^{N})$ is dense in $W^{s,\Phi}(\R^N)$. On this subject see \cite[Proposition 2.11]{Bonder} and \cite[Proposition 2.9]{DNFBS}.

In order to state some embedding results for fractional Orlicz-Sobolev spaces we introduce the following notation:

Given two $N$-functions $A$ and $B$, we say that \emph{$B$ is essentially stronger than $A$} or equivalently that \emph{$A$ decreases essentially more rapidly than $B$}, and denoted by $A<< B$, if for each $a>0$ there exists $x_a\geq 0$ such that $A(x)\leq B(ax)$ for $x\geq x_a$. Similarly, we write $A < B$ whenever $A(x) \leq B(x)$ for each $x \geq x_0$ with $x_0 > 0$ fixed.

When the $N$-function $\Phi$ fulfills condition $(\varphi_4)$, the critical function for the fractional Orlicz-Sobolev embedding is given by
$$
\Phi_{*}^{-1}(t)=\int_{0}^{t}\frac{G^{-1}(\tau)}{\tau^{\frac{N+s}{N}}}\mathrm{d}\tau.
$$

The following result can be found in \cite{Bahrouni-Emb}. On this subject see also \cite{Cianchi} for further generalizations.
\begin{theorem}\label{ceb}
	Let $\Phi$ be an $N$-function satisfying
$$
\displaystyle\int_{0}^{1}\frac{\Phi^{-1}(\tau)}{\tau^{\frac{N+s}{N}}}\mathrm{d}\tau<\infty\quad \text{and}\quad
\displaystyle\int_{1}^{+\infty}\frac{\Phi^{-1}(\tau)}{\tau^{\frac{N+s}{N}}}\mathrm{d}\tau=\infty,
$$
where $0<s<1$ and $\Omega\subset \R^N$ is a $C^{0,1}$ bounded open subset. Then, the following statements hold:
\begin{itemize}
  \item[(i)] \label{7}
  the embedding $W^{s,\Phi}(\Omega)\hookrightarrow L^{\Phi_{*}}(\Omega)$ is continuous;

  \item[(ii)] \label{Bem}
  for any $N$-function $B$ such that $B << \Phi_{*}$, the embedding  $W^{s,\Phi}(\Omega)\hookrightarrow L_{B}(\Omega)$ is compact.
\end{itemize}
\end{theorem}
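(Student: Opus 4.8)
\medskip

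\noindent The strategy, carried out in full in \cite{Bahrouni-Emb} (see also \cite{Cianchi}), is to transfer the embedding to the whole space $\R^{N}$, prove a global Sobolev-type inequality there, and then recover compactness on $\Omega$ through a modular uniform-integrability argument. For the reduction to $\R^{N}$: since $\partial\Omega$ is $C^{0,1}$, a finite atlas of boundary charts, a subordinate partition of unity, a bi-Lipschitz flattening of each boundary patch, and a reflection across the flattened boundary produce a bounded linear extension operator $E\colon W^{s,\Phi}(\Omega)\to W^{s,\Phi}(\R^{N})$; what makes the modular estimates for the reflected and localized pieces close up is precisely that $\Phi$ and its complementary function $\widetilde{\Phi}$ satisfy the $\Delta_{2}$-condition, a consequence of $(\varphi_{3})$ through the two-sided bounds in Proposition~\ref{lema_naru}. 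Granting the global embedding $W^{s,\Phi}(\R^{N})\hookrightarrow L^{\Phi_{*}}(\R^{N})$, restriction to $\Omega$ gives $\|u\|_{L^{\Phi_{*}}(\Omega)}\le\|Eu\|_{L^{\Phi_{*}}(\R^{N})}\le C[Eu]_{W^{s,\Phi}(\R^{N})}\le C'\|u\|_{W^{s,\Phi}(\Omega)}$, which is $(i)$.

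\smallskip

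\noindent For the global inequality on $\R^{N}$, by density of $C_{c}^{\infty}(\R^{N})$ it suffices to take $u$ smooth with compact support. One first invokes a fractional P\'olya--Szeg\H{o} principle for the Orlicz modular of the Gagliardo functional,
$$\iint_{\R^{N}\times\R^{N}}\Phi\left(\frac{|u^{*}(x)-u^{*}(y)|}{|x-y|^{s}}\right)\frac{\mathrm{d}x\,\mathrm{d}y}{|x-y|^{N}}\le\iint_{\R^{N}\times\R^{N}}\Phi\left(\frac{|u(x)-u(y)|}{|x-y|^{s}}\right)\frac{\mathrm{d}x\,\mathrm{d}y}{|x-y|^{N}},$$
where $u^{*}$ is the symmetric decreasing rearrangement of $u$ (via the layer-cake formula this reduces to a rearrangement inequality for the super-level sets of the difference quotient), so it is enough to treat radially symmetric non-increasing $u$. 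For such $u$ a pointwise bound of the radial profile in terms of the Gagliardo modular is available; integrating it and changing variables according to $\Phi_{*}^{-1}(t)=\int_{0}^{t}\Phi^{-1}(\tau)\tau^{-(N+s)/N}\,\mathrm{d}\tau$ yields $\rho_{\Phi_{*},\R^{N}}(u/C_{0})\le 1$ whenever $[u]_{W^{s,\Phi}(\R^{N})}\le 1$, hence $\|u\|_{L^{\Phi_{*}}(\R^{N})}\le C_{0}[u]_{W^{s,\Phi}(\R^{N})}$ after rescaling. Here the hypothesis $\int_{0}^{1}\Phi^{-1}(\tau)\tau^{-(N+s)/N}\,\mathrm{d}\tau<\infty$ is precisely what makes $\Phi_{*}^{-1}(t)$ finite for every $t$ (so that $\Phi_{*}$ is well defined), while $\int_{1}^{+\infty}\Phi^{-1}(\tau)\tau^{-(N+s)/N}\,\mathrm{d}\tau=\infty$ forces $\Phi_{*}^{-1}(t)\to\infty$, i.e.\ puts the problem in the genuinely Sobolev (not Morrey) regime and certifies that $\Phi_{*}$ is an unbounded $N$-function. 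An equivalent route is the Adams--O'Neil one: bound $|u(x)|$ pointwise by a Riesz-type potential built from the fractional difference quotient of $u$ and use that this potential operator maps $L_{\Phi}(\R^{N})$ into $L^{\Phi_{*}}(\R^{N})$, a mapping property resting on the same two integrability conditions.

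\smallskip

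\noindent For the compactness statement, let $B$ be an $N$-function with $B<<\Phi_{*}$ and $(u_{n})$ bounded in $W^{s,\Phi}(\Omega)$. Extending by $E$ we obtain a bounded sequence in $W^{s,\Phi}(\R^{N})$ supported in a fixed ball, so up to a subsequence $u_{n}\rightharpoonup u$ in $W^{s,\Phi}(\Omega)$. Splitting the Gagliardo integral according to whether $|D_{s}u(x,y)|\le 1$ or $>1$ and using $\Phi(1)\min\{t^{\ell},t^{m}\}\le\Phi(t)$ from Proposition~\ref{lema_naru} together with the boundedness of $\Omega$, one checks that $W^{s,\Phi}(\Omega)\hookrightarrow W^{\sigma,1}(\Omega)$ continuously for every $\sigma\in(0,s)$; by the classical fractional Rellich--Kondrachov theorem this embedding is compact, so along a further subsequence $u_{n}\to u$ in $L^{1}(\Omega)$ and $u_{n}\to u$ a.e.\ in $\Omega$. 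By $(i)$, $\{u_{n}-u\}$ is bounded in $L^{\Phi_{*}}(\Omega)$, whence $\sup_{n}\rho_{\Phi_{*},\Omega}(u_{n}-u)<\infty$ ($\Phi_{*}$ again obeying a two-sided bound as in Proposition~\ref{lema_naru}); since $B<<\Phi_{*}$, for each fixed $C>0$ and $\eps\in(0,1]$ we have $B(Ct)\le\Phi_{*}(\eps t)\le\eps\,\Phi_{*}(t)$ for $t$ large, and combining this with $u_{n}-u\to 0$ a.e.\ and dominated convergence on the complementary region gives $\rho_{B,\Omega}\big(C(u_{n}-u)\big)\to 0$ for \emph{every} $C>0$ --- this is where $B<<\Phi_{*}$ is used decisively --- hence $u_{n}\to u$ in $L_{B}(\Omega)$. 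Thus $(ii)$ holds.

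\smallskip

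\noindent The technical heart is the second step: the fractional P\'olya--Szeg\H{o} inequality for the Orlicz modular of the Gagliardo functional and the sharp pointwise profile estimate for radial functions --- or, on the alternative route, the exact mapping property of the fractional potential operator between $L_{\Phi}$ and $L^{\Phi_{*}}$, which depends delicately on the two integrability conditions on $\Phi^{-1}$. Constructing the bounded extension operator in the fractional Orlicz setting also requires some care, although the $\Delta_{2}$-property supplied by $(\varphi_{3})$ makes it routine.
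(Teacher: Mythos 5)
You should first note that the paper itself offers no proof of Theorem~\ref{ceb}: it is imported from \cite{Bahrouni-Emb} (with \cite{Cianchi} cited for generalizations), so there is no in-paper argument to compare yours against; the only fair comparison is with the cited literature, and your outline is consistent with it. Judged on its own terms, your deduction of (ii) from (i) is essentially complete and correct: the embedding $W^{s,\Phi}(\Omega)\hookrightarrow W^{\sigma,1}(\Omega)$ for $\sigma\in(0,s)$ (split at $|D_su|\le 1$, use $t\le \Phi(t)/\Phi(1)$ for $t\ge 1$ and the integrability of $|x-y|^{-(N-(s-\sigma))}$ on the bounded set $\Omega\times\Omega$), the fractional Rellich--Kondrachov theorem to get a.e.\ convergence, and the uniform-integrability argument exploiting $B<<\Phi_{*}$ all work. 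Two small repairs there: boundedness of $\|u_n-u\|_{L^{\Phi_*}(\Omega)}$ does not by itself give $\sup_n\rho_{\Phi_*,\Omega}(u_n-u)<\infty$ unless $\Phi_*\in\Delta_2$; either justify the $\Delta_2$/two-sided bound for $\Phi_*$ from the standing assumptions (your parenthetical appeal to Proposition~\ref{lema_naru} is not literally applicable, since that proposition concerns $\Phi$, and the theorem as stated assumes only the two integral conditions on $\Phi^{-1}$), or avoid it altogether by normalizing, i.e.\ setting $M:=\sup_n\|u_n-u\|_{\Phi_*}$, $w_n:=(u_n-u)/M$ so that $\rho_{\Phi_*}(w_n)\le 1$, and writing $B(CMt)\le\Phi_*(\eps t)\le\eps\Phi_*(t)$ for $t$ large.

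For part (i), by contrast, your argument consists of asserting the two genuinely hard ingredients rather than proving them: the existence of a bounded extension operator $W^{s,\Phi}(\Omega)\to W^{s,\Phi}(\R^N)$ for $C^{0,1}$ domains, and the global modular inequality on $\R^N$ (via the fractional P\'olya--Szeg\H{o} principle of \cite{DNFBS} together with the radial profile estimate, or via the Riesz-potential mapping property). Those are precisely where the integral conditions on $\Phi^{-1}$ and the sharp form of $\Phi_*$ enter, and they are the content of \cite{Bahrouni-Emb,Cianchi}; note also that the works you defer to do not in fact proceed by whole-space extension, but prove the embedding on the bounded domain directly (potential/rearrangement estimates on $\Omega$), so the extension operator you posit is an extra unproved ingredient of your route, not of theirs. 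As a citation-level reconstruction this is acceptable --- it is no less than what the paper does --- but be aware that the technical heart of (i) remains outsourced in your write-up.
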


\begin{lemma}\label{V0ineq}[Lemma 3.1 \cite{Bahrouni}]
	Let $\Omega \subseteq \mathbb{R}^N$ and consider an $N$-function $\Phi$ satisfying $(\varphi_3)$. Then,
	$$\xi^-([u]_{s,\Phi})\leq \iint \Phi\left(|D_s u|\right)\,\mathrm{d}\mu\leq \xi^+([u]_{s,\Phi}),\quad u\in W^{s,\Phi}(\Omega).$$
\end{lemma}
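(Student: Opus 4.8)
Set $\rho(v):=\rho_{s,\Phi,\R^N}(v)=\iint\Phi(|D_sv|)\,\mathrm d\mu$ and $\lambda:=[u]_{s,\Phi}$, which is finite because $u\in W^{s,\Phi}(\Omega)$. The plan is to run the argument used for the second estimate of Proposition \ref{lema_naru}, replacing $|u(x)|\,\mathrm dx$ throughout by $|D_su(x,y)|\,\mathrm d\mu$. If $\lambda=0$, then by monotonicity of $\mu\mapsto\rho(u/\mu)$ one has $\rho(u/\mu)\le1$ for every $\mu>0$, while $\Phi(|D_su|/\mu)\to\infty$ as $\mu\downarrow0$ on the set $\{|D_su|>0\}$ (since $\Phi(t)/t\to\infty$); Fatou's lemma then forces $D_su=0$ $\mu$-a.e., so both sides of the asserted inequality vanish. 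Hence it remains to treat $\lambda\in(0,\infty)$.

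\textbf{Step 1: the modular at the seminorm equals $1$.} By $(\varphi_3)$ (i.e.\ $t^2\varphi(t)\le m\Phi(t)$) and the first estimate of Proposition \ref{lema_naru} one gets $\Phi(2t)\le2^m\Phi(t)$, so $\Phi\in\Delta_2$; iterating this bound yields $\rho(u/\mu)<\infty$ for every $\mu>0$, and by dominated convergence $\mu\mapsto\rho(u/\mu)$ is continuous and non-increasing on $(0,\infty)$. Since the set $\{\mu>0:\rho(u/\mu)\le1\}$ is an interval with left endpoint $\lambda$, letting $\mu\downarrow\lambda$ gives $\rho(u/\lambda)\le1$; since $\rho(u/\mu)>1$ for every $\mu\in(0,\lambda)$, letting $\mu\uparrow\lambda$ gives $\rho(u/\lambda)\ge1$. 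Therefore $\rho(u/\lambda)=1$.

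\textbf{Step 2: pointwise comparison and integration.} Apply the first estimate of Proposition \ref{lema_naru} with $\rho=|D_su(x,y)|/\lambda$ and $t=\lambda$ to obtain, for $\mu$-a.e.\ $(x,y)$,
\[
\xi^-(\lambda)\,\Phi\!\Big(\tfrac{|D_su(x,y)|}{\lambda}\Big)\;\le\;\Phi(|D_su(x,y)|)\;\le\;\xi^+(\lambda)\,\Phi\!\Big(\tfrac{|D_su(x,y)|}{\lambda}\Big).
\]
Integrating over $\R^N\times\R^N$ against $\mathrm d\mu$ and inserting $\rho(u/\lambda)=1$ from Step 1 yields exactly
\[
\xi^-([u]_{s,\Phi})\;\le\;\iint\Phi(|D_su|)\,\mathrm d\mu\;\le\;\xi^+([u]_{s,\Phi}),
\]
which is the claim.

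\textbf{Main obstacle.} The only step that is not purely formal is the identity $\rho(u/[u]_{s,\Phi})=1$ in Step 1: the inequality ``$\le1$'' is soft, but ``$\ge1$'' requires passing to the limit inside the modular, and this is precisely where the $\Delta_2$-property inherited from $(\varphi_3)$ (the finiteness of $m$) is used. Everything else is a line-by-line transcription, with $|u(x)|\,\mathrm dx$ replaced by $|D_su(x,y)|\,\mathrm d\mu$, of the Luxemburg-norm case in Proposition \ref{lema_naru}.
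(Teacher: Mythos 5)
Your proof is correct: the paper itself gives no argument for this lemma but imports it verbatim as Lemma 3.1 of \cite{Bahrouni}, and your route---deduce $\Delta_2$ from $(\varphi_3)$, establish $\rho_{s,\Phi,\R^N}(u/[u]_{s,\Phi})=1$, then integrate the pointwise estimate of Proposition \ref{lema_naru}---is precisely the standard modular-versus-Luxemburg-norm argument used there, with the degenerate case $[u]_{s,\Phi}=0$ handled properly. As a minor remark, the equality of the modular at the seminorm (and hence the explicit use of $\Delta_2$) could be bypassed for the lower bound by applying the pointwise estimate at $\mu<[u]_{s,\Phi}$, where the modular exceeds $1$, and letting $\mu\uparrow[u]_{s,\Phi}$ using continuity of $\xi^-$; but since $(\varphi_3)$ does yield $\Delta_2$, your version is equally valid.
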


 \begin{lemma}\label{V.ineq}[Lemma 4.3 \cite{Bahrouni-Emb}]
 	Let $\Phi$ be an $N$-function satisfying $(\varphi_3)$. If $V(x)$ satisfies $(V_1)$ and $(V_2)$, then
 	$$\xi^-(\|u\|_{V,\Phi})\leq \int_{\mathbb{R}^N}V(x)\Phi(u)\mathrm{d}x\leq \xi^+(\|u\|_{V,\Phi}),\quad u\in X.$$
 \end{lemma}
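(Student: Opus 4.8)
The plan is to deduce the asserted modular--norm estimate directly from the pointwise $\Delta_2$-type inequality recorded in Proposition~\ref{lema_naru}, namely $\xi^{-}(t)\Phi(\rho)\le\Phi(\rho t)\le\xi^{+}(t)\Phi(\rho)$ for $\rho,t>0$, combined with the definition of the Luxemburg norm $\|\cdot\|_{V,\Phi}$; this is simply the weighted analogue of the second displayed inequality in that proposition (and of Lemma~\ref{V0ineq}). First I would dispose of the trivial case $u=0$, for which $\|u\|_{V,\Phi}=0$, $\int V(x)\Phi(u)\,\mathrm{d}x=0$ and $\xi^{\pm}(0)=0$. For $u\in X\setminus\{0\}$ I would set $\lambda_0:=\|u\|_{V,\Phi}$ and record that $\lambda_0<\infty$: since $u\in X$ gives $\int V(x)\Phi(|u|)\,\mathrm{d}x<\infty$ and $\Phi$ is even and nondecreasing on $[0,\infty)$, the map $\lambda\mapsto\int V(x)\Phi(u/\lambda)\,\mathrm{d}x$ is nonincreasing and, by dominated convergence, tends to $0$ as $\lambda\to\infty$; hence the sublevel set $\{\lambda>0:\int V(x)\Phi(u/\lambda)\,\mathrm{d}x\le 1\}$ is nonempty and is an interval with left endpoint $\lambda_0$.

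For the upper bound I would pick any $\lambda>\lambda_0$, so that $\int V(x)\Phi(u/\lambda)\,\mathrm{d}x\le 1$, apply Proposition~\ref{lema_naru} pointwise with $t=\lambda$ and $\rho=|u(x)|/\lambda$ (the estimate being trivial where $u(x)=0$) to get $\Phi(u(x))\le\xi^{+}(\lambda)\,\Phi(u(x)/\lambda)$, then multiply by $V(x)\ge 0$ and integrate to obtain
\[
\int V(x)\Phi(u)\,\mathrm{d}x\le\xi^{+}(\lambda)\int V(x)\Phi\!\left(\frac{u}{\lambda}\right)\mathrm{d}x\le\xi^{+}(\lambda),
\]
and finally let $\lambda\downarrow\lambda_0$ using continuity of $\xi^{+}$. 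For the lower bound I would assume $\lambda_0>0$ (otherwise $\xi^{-}(\lambda_0)=0$ and there is nothing to prove) and pick $0<\lambda<\lambda_0$, so that $\lambda$ lies outside the sublevel set and $\int V(x)\Phi(u/\lambda)\,\mathrm{d}x>1$; the left-hand inequality of Proposition~\ref{lema_naru} gives $\Phi(u(x))\ge\xi^{-}(\lambda)\,\Phi(u(x)/\lambda)$, and multiplying by $V(x)$ and integrating yields
\[
\int V(x)\Phi(u)\,\mathrm{d}x\ge\xi^{-}(\lambda)\int V(x)\Phi\!\left(\frac{u}{\lambda}\right)\mathrm{d}x\ge\xi^{-}(\lambda);
\]
letting $\lambda\uparrow\lambda_0$ then gives the claim, and combining the two estimates finishes the proof.

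I do not expect a genuine obstacle here: the whole argument is a one-variable monotonicity/limit argument layered on top of the already-available pointwise bound. The only point deserving mild attention is the passage to the limit $\lambda\to\lambda_0$ from the two sides, which is why I would argue through the monotone sublevel set rather than claiming outright that the infimum defining $\|u\|_{V,\Phi}$ is attained. If one prefers, one can instead invoke $\Phi\in\Delta_2$ --- which follows from $(\varphi_3)$ since $\Phi(2t)\le 2^{m}\Phi(t)$ --- to conclude that $\int V(x)\Phi(u/\lambda_0)\,\mathrm{d}x=1$ exactly, after which a single application of Proposition~\ref{lema_naru} with $t=\lambda_0$ and $\rho=|u(x)|/\lambda_0$ delivers both inequalities simultaneously. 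In either approach the proof uses only the non-negativity of $V$ (to integrate the pointwise estimates) and the finiteness of $\int V(x)\Phi(|u|)\,\mathrm{d}x$ for $u\in X$.
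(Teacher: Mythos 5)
Your proof is correct. There is nothing in the paper to compare it against: the lemma is imported verbatim (as Lemma 4.3 of \cite{Bahrouni-Emb}) with no proof given here, and your argument is the standard modular--Luxemburg-norm comparison, i.e.\ the weighted analogue of the second estimate in Proposition \ref{lema_naru} and of Lemma \ref{V0ineq}, which is essentially what the cited source does. Both of your routes are sound --- the two-sided approximation $\lambda\downarrow\lambda_0$ and $\lambda\uparrow\lambda_0$ through the monotone sublevel set, or alternatively using $\Phi\in\Delta_2$ (which follows from $(\varphi_3)$) to get $\int V(x)\Phi(u/\lambda_0)\,\mathrm{d}x=1$ and conclude in one stroke --- and your closing remark is apt: the inequality only uses $V\ge 0$ and the finiteness of the weighted modular for $u\in X$, the hypotheses $(V_1)$--$(V_2)$ in the statement being labels carried over from \cite{Bahrouni-Emb} rather than conditions genuinely needed for the estimate.
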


\subsection{Technical lemmas}

We start this subsection by proving a result inspired by \cite[Theorem 2, p.16]{rao}. This result plays a very important role in order to obtain compact embedding taking into account some hypothesis on the potential $V$. 

\begin{lemma}\label{Aux}
	Let $M, N$ be N-functions such that $M<<N$. Then, there exists an N-function $R$ satisfying $R\circ M< N$.
\end{lemma}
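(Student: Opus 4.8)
The plan is to use the characterization of the relation $M<<N$ in terms of the ratio $N^{-1}(t)/M^{-1}(t)$ and to build $R$ explicitly from this ratio. Recall that $M<<N$ is equivalent to $\lim_{t\to\infty} M^{-1}(t)/N^{-1}(t) = 0$ (for the behavior at infinity, which is all that matters for the relation ``$<$''). First I would set $h(t) := N^{-1}(t)/M^{-1}(t)$, which is (up to equivalence) nondecreasing and tends to $+\infty$ as $t\to\infty$. The idea is to let $R$ be the $N$-function whose inverse is, roughly, $R^{-1}(t) = M^{-1}(t)\cdot g(M(t))$ for a suitable slowly growing factor $g$, chosen so that $R(M(x)) \le N(x)$ for large $x$; equivalently, working with inverses, one wants $M^{-1}(R^{-1}(s)) \ge N^{-1}(s)$ for $s$ large, i.e. one wants $R^{-1}$ to ``undo'' enough of $M^{-1}$ to beat $N^{-1}$.

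The cleanest route is the following. Since $h(t)\to\infty$, pick a sequence $t_k\uparrow\infty$ with $h(t)\ge 2^k$ for $t\ge t_k$, and define a piecewise-linear (hence convex after a standard mollification/convexification) function $r$ on $[0,\infty)$ by prescribing its values so that $r$ grows like $2^k$ on the block corresponding to $[M(t_k), M(t_{k+1}))$; then set $R(x) := \int_0^x r(\tau)\,\mathrm{d}\tau$ after replacing $r$ by its least nondecreasing right-continuous minorant that makes $R$ an $N$-function (so $r(0^+)=0$, $r(\infty)=\infty$, $r$ nondecreasing). With this construction one checks directly that for $x \ge t_k$,
\[
R(M(x)) \le M(x)\, r(M(x)) \le M(x)\cdot \frac{N^{-1}(M(x))}{M^{-1}(M(x))}\cdot C = C\, \frac{M(x)}{x}\, N^{-1}(M(x)) \le N(x),
\]
where the middle inequality holds because $r(M(x)) \le h(x)$ by the choice of the $t_k$ (on the relevant block $r \le 2^{k} \le h$), and the last inequality uses $M(x) \le x\, m(x)$ type bounds together with $N^{-1}(M(x)) \le $ something controllable — this last chain is where I would need to be careful with the convexity estimates $N(\theta t)\le \theta N(t)$ for $\theta\le 1$ and the elementary inequality $\Phi(t) \le t\phi(t)$ relating an $N$-function to its density.

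The main obstacle I anticipate is exactly this final verification that the explicitly constructed $R$ is genuinely an $N$-function (evenness is free by working with $|x|$; the real content is convexity of $R$ together with $R(t)/t\to 0$ at $0$ and $\to\infty$ at $\infty$) \emph{and} that the composed inequality $R(M(x)) \le N(x)$ survives the convexification step — replacing $r$ by a nondecreasing minorant can only decrease $R$, which is the favorable direction, so this should go through, but it requires bookkeeping. An alternative, and perhaps safer, strategy is to avoid the explicit block construction and instead cite \cite[Theorem 2, p.16]{rao} in the sharper form stated just before Theorem \ref{compact} in the excerpt (``for given $\Psi<<\Phi_*$ with $\Psi\in\Delta_2$ there exists $R$ with $\Psi\circ R < \Phi_*$'') and then note that the $\Delta_2$ hypothesis there is not actually needed for the direction claimed in Lemma \ref{Aux}: one only needs the existence of \emph{some} $N$-function $R$ with $R\circ M < N$, and Rao--Ren's argument (which is a diagonal/interpolation construction on the inverses, exactly as sketched above) produces such an $R$ without invoking $\Delta_2$. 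I would therefore structure the proof as: (1) reduce to constructing $R^{-1}$ via the ratio $h$; (2) carry out the block construction with convexification; (3) verify $R$ is an $N$-function; (4) verify $R(M(x))\le N(x)$ for $x$ large, which gives $R\circ M < N$ by definition of ``$<$''.
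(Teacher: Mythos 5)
Your overall strategy --- build $R$ as the integral of an explicitly prescribed nondecreasing density $r$, use $R(t)\le t\,r(t)$, and verify $R(M(x))\le N(x)$ for large $x$ --- is the same as the paper's. But there is a genuine error in the quantity on which your construction rests. With the paper's convention, $M<<N$ means $M(x)\le N(ax)$ for every $a>0$ and all large $x$, so $N^{-1}(s)\le a\,M^{-1}(s)$ for large $s$; hence your ratio $h(t)=N^{-1}(t)/M^{-1}(t)$ tends to $0$, not to $+\infty$ (and the equivalence you ``recall'', $\lim_{t\to\infty}M^{-1}(t)/N^{-1}(t)=0$, is the reciprocal of the correct one). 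Consequently the block construction cannot even be started --- there are no $t_k$ with $h\ge 2^k$ beyond $t_k$ --- and the constraint $r\le h$ on blocks would force $r\to 0$, so the resulting $R$ would fail $R(t)/t\to\infty$ and would not be an $N$-function. The quantity that genuinely tends to $+\infty$ and that you need as an upper envelope for $r$ is $t\mapsto N(M^{-1}(t))/t$, i.e.\ $N(y)/M(y)$ at $y=M^{-1}(t)$: indeed $R(M(x))\le M(x)\,r(M(x))\le N(x)$ holds precisely when $r(M(x))\le N(x)/M(x)$.

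The paper implements exactly this and thereby avoids all the obstacles you anticipate: it sets $r(t):=\inf_{x>t}N(M^{-1}(x))/x$ for $t\ge 1$ and $r(t)=r(1)t$ for $0\le t<1$. The infimum makes $r$ automatically nondecreasing (so no convexification or ``least minorant'' step is needed), $r(t)\to\infty$ follows from $M<<N$ as above, and then $R(t)=\int_0^t r(s)\,\mathrm{d}s\le t\,r(t)\le N(M^{-1}(t))$ for $t\ge 1$, i.e.\ $R(M(s))\le N(s)$ for $s\ge M^{-1}(1)$, which is what $R\circ M<N$ means; the behaviour near the origin can be adjusted as in Remark~\ref{rem-r}. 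Finally, your fallback of citing the statement quoted before Theorem~\ref{compact} does not quite work either: that statement produces $R$ with $\Psi\circ R<\Phi_*$, i.e.\ the composition in the \emph{opposite} order from $R\circ M<N$, so it is not the lemma you need and you would still have to rerun the construction with the corrected ratio.
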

\begin{proof}
	Let $r:[0,+\infty)\rightarrow\mathbb{R}$ be defined by
	$$
	r(t):=\left\{
	\begin{array}{ll}
	\displaystyle\inf_{x>t}\frac{N\circ M^{-1}(x)}{x},& t\geq 1\\
	r(1)t,& 0\leq t<1.
	\end{array}
	\right.
	$$
	It is not hard to verify that $r(t)$ satisfies the following properties:
	\begin{enumerate}
		\item[$(i)$] $\displaystyle\lim_{t\to \infty} r(t)=\infty$;
		\item[$(ii)$] $\displaystyle\lim_{t\to 0} r(t)=0$;
		\item[$(iii)$] $t\mapsto r(t)$ is non increasing.
	\end{enumerate}
	Let $R(t):=\int_0^tr(s)\,\mathrm{d}s$ be the primitive of $r(t)$. We consider the extension of $R(t)$ to $\mathbb{R}^-$ by setting $R(t)=R(-t)$. One may conclude that $R$ is an $N$-function. Finally, it follows that
	$$R(t)\leq r(t)t=N\circ M^{-1}(t), \quad t\geq 1,$$
	which is equivalent to $R\circ M(s)\leq N(s),~s\geq M^{-1}(1)$.	
\end{proof}

\begin{remark}\label{obs2}
	By choosing $M=\Phi$ and  $N=\Phi_*$ in Lemma \ref{Aux}, we obtain that $R\circ \Phi<\Phi_*$. Thus,
$
X\hookrightarrow L_{R\circ\Phi}(\mathbb{R}^{N}),
$
that is, there exists $S_{R\circ\Phi}>0$ such that
		\[
		\Vert u\Vert_{R\circ\Phi}\leq S_{R\circ\Phi}\Vert u\Vert.
		\]
\end{remark}

\begin{lemma}\label{interp11}
	Let $\Psi,R$ be N-functions. Then
	$$\Psi\left(\frac{xy}{4}\right)\leq \Psi(R(x))+\Psi(\widetilde{R}(y)),~x,y\geq 0.$$
\end{lemma}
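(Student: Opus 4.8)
The statement to prove is the Young-type inequality
\[
\Psi\!\left(\frac{xy}{4}\right)\leq \Psi(R(x))+\Psi(\widetilde{R}(y)),\qquad x,y\geq 0,
\]
for arbitrary $N$-functions $\Psi$ and $R$, with $\widetilde R$ the complementary function of $R$. The plan is to start from Young's inequality for the pair $(R,\widetilde R)$, namely $st\leq R(s)+\widetilde R(t)$ for all $s,t\geq 0$, apply it with $s=x$, $t=y$ to bound the product $xy$, and then transport this inequality through $\Psi$ using convexity and monotonicity. The factor $4$ in the denominator is exactly what is needed to absorb the two applications of convexity, as explained below.

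First I would record the two basic facts. Fact (a): every $N$-function is nondecreasing on $[0,\infty)$, so from $xy\leq R(x)+\widetilde R(y)$ we get $\Psi\!\left(\frac{xy}{4}\right)\leq \Psi\!\left(\frac{R(x)+\widetilde R(y)}{4}\right)$. Fact (b): $\Psi$ is convex with $\Psi(0)=0$, hence $\Psi(\lambda t)\leq \lambda\,\Psi(t)$ for $0\leq\lambda\leq 1$ and, more generally, $\Psi$ is superadditive-free in the sense that $\Psi\!\left(\frac{a+b}{2}\right)\leq \frac12\Psi(a)+\frac12\Psi(b)$. Writing $\frac{R(x)+\widetilde R(y)}{4}=\frac12\cdot\frac{R(x)}{2}+\frac12\cdot\frac{\widetilde R(y)}{2}$ and applying convexity (Fact (b)) twice gives
\[
\Psi\!\left(\frac{R(x)+\widetilde R(y)}{4}\right)\leq \frac12\,\Psi\!\left(\frac{R(x)}{2}\right)+\frac12\,\Psi\!\left(\frac{\widetilde R(y)}{2}\right)\leq \frac12\,\Psi(R(x))+\frac12\,\Psi(\widetilde R(y)),
\]
where in the last step I again used $\Psi(t/2)\leq \tfrac12\Psi(t)\leq\Psi(t)$. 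Chaining the displays yields the claim, in fact with the slightly stronger constant $\tfrac12$ in front of each term on the right-hand side, which a fortiori gives the stated inequality.

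There is no real obstacle here; the only point requiring a little care is making sure the monotonicity of $\Psi$ on $[0,\infty)$ and the subhomogeneity bound $\Psi(\lambda t)\le\lambda\Psi(t)$ for $\lambda\in[0,1]$ are invoked correctly — both follow immediately from $\Psi$ being a convex function vanishing at the origin, which is part of the definition of an $N$-function recalled in Section \ref{s0}. One should also note that Young's inequality $st\leq R(s)+\widetilde R(t)$ is the defining property of the complementary function $\widetilde R$, so it may be quoted directly. I would therefore present the proof as the three-line chain above, after stating these elementary facts, and remark that the constant $4$ is not optimal but is all that is needed in the sequel.
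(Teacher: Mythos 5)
Your proof is correct and follows essentially the same route as the paper: Young's inequality for the complementary pair $(R,\widetilde R)$ followed by convexity and monotonicity of $\Psi$ to absorb the factor $4$. The only (immaterial) difference is that the paper applies Young at $(x/2,y/2)$ and uses convexity of $R$ and $\widetilde R$ as well, whereas you apply Young at $(x,y)$ and push the whole factor $1/4$ through $\Psi$ alone — both yield the inequality with room to spare.
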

\begin{proof}
	Using Young's inequality and the fact that $R,\widetilde{R},\Psi$ are convex functions, we have
	\begin{eqnarray}
	\Psi(\frac{xy}{4})&\leq& \Psi\left(R(\frac{x}{2})+\widetilde{R}(\frac{y}{2})\right)\nonumber\\
	&\leq& \frac12\Psi(R({x}))+\frac12\Psi(\widetilde{R}({y}))\nonumber\\
	&\leq& \Psi(R({x}))+\Psi(\widetilde{R}({y})).\nonumber
	\end{eqnarray}
This ends the proof.
\end{proof}

\begin{remark}\label{rem-r}
	It is important to point out that in Lemma \ref{Aux}, we can redefine $r$ near to the origin in such way that $R$ is an $N$--function. For instance, given $b\in(0,1)$ we can redefine $r$ by
	\[
	r(t)=\frac{br(1)}{\Phi_*'(1)}\Phi_*'(t^{1/b})t^{1/b-1},\quad t<1.
	\]	
\end{remark}


\section{Part I -- Proof of the abstract results}\label{s3}

\subsection{Proof of Theorem \ref{comp1} (Compact embedding)}

Let $(u_{n})\subset X$ be such that $u_{n}\rightharpoonup u$ weakly in $X$. Thus, we have that $u_{n}\rightarrow u$ strongly in $L_{\Psi,loc}(\mathbb{R}^{N})$ with $\Phi<\Psi<<\Phi_{*}$. Let us prove that $u_{n}\rightarrow u$ strongly in $L_{\Phi}(\mathbb{R}^{N})$. In view of Br\'ezis-Lieb's Theorem, it suffices to show that
 \[
  \alpha_{n}:=\int_{\mathbb{R}^{N}}\Phi(u_{n})\rightarrow\int_{\mathbb{R}^{N}}\Phi(u).
 \]
Since $(\alpha_{n})$ is bounded we have, up to a subsequence, $\alpha_{n}\rightarrow \alpha$. Hence, it follows from Fatou's Lemma that $\alpha\geq\int_{\mathbb{R}^N}\Phi(u)\,\mathrm{d} x$. By using local convergence and that $\Phi\in\Delta_{2}$ we have
 \begin{equation}\label{c}
  \int_{B_{R}}\Phi(u_{n})\rightarrow \int_{B_{R}}\Phi(u),
 \end{equation}
where $B_{R}:=B_{R}(0)$.

\vspace{0,3cm}

\noindent \textit{Claim.} For given $\varepsilon>0$, there exists $R=R(\varepsilon)>0$ such that
 \begin{equation}\label{c1}
  \int_{B_{R}^{c}}\Phi(u_{n})<\varepsilon,
 \end{equation}
for $n\in\mathbb{N}$ large.

\vspace{0,3cm}

If \eqref{c1} holds, then
 \begin{eqnarray*}
  \int_{\mathbb{R}^{N}}\Phi(u) & = & \int_{B_{R}}\Phi(u)+\int_{B_{R}^{c}}\Phi(u)\\
                                                & \geq & \lim_{n\rightarrow\infty}\int_{B_{R}}\Phi(u_{n})\\
                                                & = & \lim_{n\rightarrow\infty}\int_{\mathbb{R}^{N}}\Phi(u_{n})-\lim_{n\rightarrow\infty}\int_{B_{R}^{c}}\Phi(u_{n})\\
                                                & \geq & \alpha-\varepsilon.
 \end{eqnarray*}
Now, let us prove the \textit{Claim.} For given $\varepsilon>0$, let $M>0$ be such that
 \begin{equation}\label{ml1}
  \frac{2}{\varepsilon}\max\{B_1,\sup_n\xi_1(\|u_n\|)\}<M,
 \end{equation}
where $B_1$ will be defined below.  By choosing $M=\Phi$ and  $N=\Phi_*$ in Lemma \ref{Aux}, we obtain that $R\circ \Phi<\Phi_*$. Define
$$f(t):=tR^{-1}\left(\frac{1}{t}\right), \quad t>0.$$
One may check that $f$ is continuous and $\displaystyle\lim_{t\to 0}f(t)=0$. In view of $(V_{1})$, let $R>1$ be sufficiently large such that
 \begin{equation}\label{ml2}
  f(\vert \{x\in B_{R}^{c}:V(x)<M \} \vert)\leq \frac{\epsilon}{2B_1}.
 \end{equation}

 Let us define the sets
 \[
  \mathcal{A}:=\{x\in B_{R}^{c}:V(x)\geq M \}
 \quad \mbox{and} \quad
 \mathcal{B}:=\{x\in B_{R}^{c}:V(x)<M \}.
 \]
In view of \cite[Lemma 4.3]{Bahrouni-Emb} we have
 \begin{equation*}
  \int_{\mathcal{A}}\Phi(u_{n}) \leq  \int_{\mathcal{A}}\frac{V(x)}{M}\Phi(u_{n})\\
                                                \leq  \frac{1}{M}\xi_{1}(\Vert u_{n}\Vert_{V,\Phi})\\
                                                \leq  \frac{1}{M}\xi_{1}(\Vert u_{n}\Vert)<\frac{\varepsilon}{2}.
 \end{equation*}
On the other hand, it follows from Lemma \ref{Aux} and H\"{o}lder's inequality that
 \begin{eqnarray}\label{ineq1}
  \int_{\mathcal{B}}\Phi(u_{n}) & \leq & 2\|\Phi(u_n)\|_R\| \chi_{_\mathcal{B}}\|_{\widetilde{R}},
 \end{eqnarray}
where $\tilde{R}$ is the complementary function of $R$. But, from \cite[Proposition 4.6.9]{Fucik}
\begin{eqnarray}\label{ineq2}
	 	\| \chi_{_\mathcal{B}}\|_{\widetilde{R}}\leq C_0 |\mathcal{B}|R^{-1}\left(\frac{1}{|\mathcal{B}|}\right)=C_0f(|\mathcal{B}|).
\end{eqnarray}
Moreover, since $R\circ\Phi<\Phi_*$, there exist $C_1,T>0$ such that $R(\Phi(t))\leq C_1\Phi_*(t)$ for all $t\geq T$. Consequently,
\begin{eqnarray}\label{ineq3}
\int_{\mathcal{B}}R(\Phi(u_n))dx&=&    \left(\int_{\mathcal{B}\cap[|u_n|\leq T]}+\int_{\mathcal{B}\cap[|u_n|> T]}\right)R(\Phi(u_n))\nonumber\\
                                &\leq& |\mathcal{B}|R(\Phi(T))+\sup_n\int_{\mathbb{R}^N} \Phi_*(u_n)\,\mathrm{d}x\nonumber\\
                                &\leq& |\mathcal{B}_1|R(\Phi(T))+\sup_n\int_{\mathbb{R}^N} \Phi_*(u_n)\,\mathrm{d}x
\end{eqnarray}
where $\mathcal{B}\subset \mathcal{B}_1$ for all $0<\epsilon<1$ small enough. Here, we define
$$B_1:=2C_0|\mathcal{B}_1|R(\Phi(T))+2C_0\sup_n\int_{\mathbb{R}^N} \Phi_*(u_n)\,\mathrm{d}x.$$
Thus, by \eqref{ineq1}-\eqref{ineq3} and \eqref{ml2}, we obtain
$$
\int_{\mathcal{B}}\Phi(u_{n})\leq B_1 |\mathcal{B}|R^{-1}\left(\frac{1}{|\mathcal{B}|}\right)=B_1 f(|\mathcal{B}|)\leq \frac{\varepsilon}{2}.
$$
Therefore,
 \[
  \int_{B_{R}^{c}}\Phi(u_{n})=\int_{\mathcal{A}}\Phi(u_{n})+\int_{\mathcal{B}}\Phi(u_{n})<\varepsilon,
 \]
which implies the \textit{Claim} and finishes the proof.

\subsection{Proof of Theorem \ref{compact} (Compact embedding)}

  Since $\Psi<<\Phi_{*}$, for given $\varepsilon>0$, there exists $T>0$ such that
	 \begin{equation}\label{ml4}
	  \frac{\Psi(\vert t\vert)}{\Phi_{*}(\vert t\vert)}\leq \frac{\varepsilon}{2\kappa}, \quad \vert t\vert \geq T,
	 \end{equation}
	where $\kappa>0$ will be chosen later. Let $(u_{n})\subset X$ be such that $u_{n}\rightharpoonup 0$ weakly in $X$. In view of Proposition \ref{comp1}, it follows that $u_{n}\rightarrow0$ strongly in $L_{\Phi}(\mathbb{R}^{N})$. Note that
	 \begin{equation}\label{ml5}
	  \int_{\mathbb{R}^{N}}\Psi(\vert u_{n}\vert)\,\mathrm{d}x=\int_{\{\vert u_{n}\vert\geq T\}}\Psi(\vert u_{n}\vert)\,\mathrm{d}x+\int_{\{\vert u_{n}\vert<T\}}\Psi(\vert u_{n}\vert)\,\mathrm{d}x.
	 \end{equation}
	
	  Let us set
	  \[
	   \kappa=\sup_n\int_{\mathbb{R}^{N}}\Phi_{*}(\vert u_{n}\vert)\,\mathrm{d}x.
	  \]
	 It follows from \eqref{ml4} that
	  \begin{equation}\label{ml6}
	   \int_{\{\vert u_{n}\vert\geq T\}}\Psi(\vert u_{n}\vert)\,\mathrm{d}x\leq \frac{\varepsilon}{2\kappa}\int_{\mathbb{R}^{N}}\Phi_{*}(\vert u_{n}\vert)\,\mathrm{d}x\leq \frac{\varepsilon}{2}.
	  \end{equation}
	 Now, we shall use assumption \eqref{mla1} or \eqref{mla3} to study the integral in \eqref{ml5} when $\vert u_{n}\vert< T$.
	
	  \subsubsection{\textbf{Proof of Theorem \ref{compact} assuming \eqref{mla1}}}
	
	  Given $a\in(0,1)$, it follows from H\"{o}lder's inequality that
	  \begin{equation}\label{ml9}
	   \int_{\{\vert u_{n}\vert<T\}}\Psi(\vert u_{n}\vert)\,\mathrm{d}x\leq \left[\int_{\{\vert u_{n}\vert<T\}}\left(\frac{\Psi(\vert u_{n}\vert)}{\Phi(\vert u_{n}\vert)^{a}} \right)^\frac{1}{1-a}\,\mathrm{d}x \right]^{1-a}\left[\int_{\mathbb{R}^{N}}\Phi(\vert u_{n}\vert)\,\mathrm{d}x \right]^{a}.
	  \end{equation}
	 \vspace{0,3cm}
	
	 \noindent \textit{Claim.} There exists $\tilde{C}>0$ such that
	  \begin{equation}\label{ml7}
	   \left(\frac{\Psi(\vert u\vert)}{\Phi(\vert u\vert)^{a}} \right)^\frac{1}{1-a}\leq \tilde{C}\Phi(\vert u\vert), \quad \vert u\vert\leq T.
	  \end{equation}
	
	 \vspace{0,3cm}
	
	  In fact, in view of assumption \eqref{mla1}, there exists $\delta>0$ such that $\Psi(\vert u\vert)\leq C\Phi(\vert u\vert)$, for all $\vert u\vert \leq \delta$. If $\delta<T$, then
	   \[
	    \frac{\Psi(\vert u\vert)}{\Phi(\vert u\vert)}\leq \frac{\Psi(T)}{\Phi(\delta)}, \quad \vert u\vert\in[\delta,T].
	   \]
	  By defining $\tilde{C}^{1-a}:=\max\left\{C, \frac{\Psi(T)}{\Phi(\delta)}\right\}$ we conclude that
	   \[
	    \frac{\Psi(\vert u\vert)}{\Phi(\vert u\vert)}\leq \tilde{C}^{1-a}, \quad \vert u\vert\leq T,
	   \]
	  which implies \eqref{ml7}.
	
	  Finally, since $u_{n}\rightarrow0$ strongly in $L_{\Phi}(\mathbb{R}^{N})$, there exists $n_{0}\in\mathbb{N}$ such that
	   \begin{equation}\label{ml8}
	    \int_{\mathbb{R}^{N}}\Phi(\vert u_{n}\vert)\,\mathrm{d}x<\left(\frac{\varepsilon}{2\tilde{C}^{1-a}} \right)^{\frac{1}{a}}, \quad \mbox{for all } n>n_{0}.
	   \end{equation}
	  Therefore, it follows from \eqref{ml6}--\eqref{ml8} that
	   \begin{equation}\label{mla8}
	    \int_{\mathbb{R}^{N}}\Psi(\vert u_{n}\vert)\,\mathrm{d}x<\varepsilon.
	   \end{equation}
	   This finishes the proof.

\subsubsection{\textbf{Proof of Theorem \ref{compact} assuming \eqref{mla3}}}
	Now, we shall prove that
	\begin{equation}\label{eq1c1}
	\lim_{n\to 0}\int_{\{|u_n|< 1\}}\Psi(\vert u_{n}\vert)\,\mathrm{d}x =0.
	\end{equation}
	Recall that $\Psi$ satisfies the $\Delta_2$ condition. Hence, by Proposition \ref{lema_naru} one has
	$$\lim_{n\to 0}\int_{\{|u_n|< 1\}}\Psi(\vert u_{n}\vert)\,\mathrm{d}x \leq\max\{\|u_n\chi_{_{[|u_n|<1]}}\|_\Psi^{\ell_\Psi},\|u_n\chi_{_{[|u_n|<1]}}\|_\Psi^{m_\Psi}\}.$$
	In this case, to prove \eqref{eq1c1} it is sufficient to prove that
	$$\lim_{n\to \infty}\|u_n\chi_{_{[|u_n|<1]}}\|_\Psi=0.$$
	In this direction, considering N-function $R$ defined in Lemma \ref{interp11} and using \cite[Theorem 1, page 179]{rao}, we deduce
	\begin{eqnarray}\label{eq1d1}
	\|u_n\chi_{_{[|u_n|<1]}}\|_\Psi&=&\|(u_n\chi_{_{[|u_n|<1]}})^{b}(u_n\chi_{_{[|u_n|<1]}})^{1-b}\|_\Psi\nonumber\\
	                                                    &\leq& \|(u_n\chi_{_{[|u_n|<1]}})^{b}\|_{\Psi\circ R}\|(u_n\chi_{_{[|u_n|<1]}})^{1-b}\|_{\Psi\circ \widetilde{R}}.
	\end{eqnarray}
	But, in view of Remark \ref{rem-r}, by definition of $R$ and $X\hookrightarrow L_{\Phi_*}(\mathbb{R}^N)$, there exist $C_0,C_1>0$ such that
	\begin{eqnarray}
	\int_{\mathbb{R}^N}\Psi(R((u_n\chi_{_{[|u_n|<1]}})^{b}))\,\mathrm{d}x \leq C_0\int_{\{|u_n|<1\}}\Phi_*(|u_n|)\,\mathrm{d}x<C_1,
	\end{eqnarray}
	this implies that $(\chi_{_{[|u_n|<1]}}u_n^{b})$ is bounded in $L_{\Psi\circ R}(\mathbb{R}^N)$. Now, using assumption \eqref{mla3} we have
	\begin{eqnarray}
	\Psi(\widetilde{R}(\chi_{_{[|u_n|<1]}}|u_n|^{1-b})) \leq  C\Phi(|u_n|),\nonumber
	\end{eqnarray}
	where $C:=\frac{\Psi(\widetilde{R}(1))}{\Phi(1)}$.
	
\subsection{Proof of Theorem \ref{homeo}}\label{sec5}

For the sake of simplicity we denote $L:=(-\Delta_{\Phi})^{s}+V(x)\varphi(\cdot)$. It is not hard to see that $L$ is a continuous operator. Note that $L$ is bijective if and only if for given $f\in X^{*}$ the problem
 \begin{equation}\label{m1}
  L(u)=(-\Delta_{\Phi})^{s}u+V(x)\varphi(u)u=f(x), \quad \mbox{in } \mathbb{R}^{N},
 \end{equation}
admits a unique solution $u\in X$. In order to do that we shall apply the Browder-Minty Theorem \cite{deimling, Zeidler} finding at least one solution for the problem \eqref{m1}. In fact, we claim that the operator $L$ is coercive. Firstly, consider the Luxemburg's norm given by
	$$\|u\|_1:=\inf\left\{\lambda>0:\iint \Phi\left(\dfrac{\vert D_{s}u\vert}{\lambda} \right)\,\mathrm{d}\mu+\int V(x)\Phi\left(\dfrac{u}{\lambda} \right)\,\mathrm{d}x\leq 1 \right\}.$$
	It is not hard to prove that $\|u\|\leq 2\|u\|_1$ and
	$$\iint \Phi\left(\dfrac{\vert D_{s}u\vert}{\|u\|_1} \right)\,\mathrm{d}\mu+\int V(x)\Phi\left(\dfrac{u}{\|u\|_1} \right)\,\mathrm{d}x=1.$$
Under these conditions we obtain
 \begin{eqnarray}\label{m2}
  \dfrac{\langle L(u), u \rangle}{\Vert u\Vert} & = & \dfrac{1}{\Vert u\Vert}\left[\iint  \varphi(\vert D_{s}u\vert)\vert D_{s}u\vert^{2}\,\mathrm{d}\mu+\int V(x)\varphi(u)u^{2}\,\mathrm{d}x \right]\nonumber\\
              & \geq & \dfrac{\ell}{\Vert u\Vert}\left[\iint \Phi(\vert D_{s}u\vert )\,\mathrm{d}\mu +\int V(x)\Phi(u)\,\mathrm{d}x \right]\nonumber\\
                      & = & \dfrac{\ell}{\Vert u\Vert}\left[\iint \Phi\left(\dfrac{\vert D_{s}u\vert}{\Vert u\Vert}\Vert u\Vert \right)\,\mathrm{d}\mu+\int V(x)\Phi\left(\dfrac{u}{\Vert u\Vert}\Vert u\Vert \right)\,\mathrm{d}x  \right]\nonumber\\
                                  & \geq & \dfrac{\ell}{\Vert u\Vert}\max\{\Vert u\Vert^{\ell},\Vert u\Vert^{m} \}\left[\iint \Phi\left(\dfrac{\vert D_{s}u\vert}{2\Vert u\Vert_1} \right)\,\mathrm{d}\mu+\int V(x)\Phi\left(\dfrac{u}{2\Vert u\Vert_1} \right)\,\mathrm{d}x \right]\nonumber\\
                                           & = & \frac{\ell}{2^m} \max\{\Vert u\Vert^{\ell-1},\Vert u\Vert^{m-1} \}\rightarrow +\infty, \quad \mbox{as } \|u\| \rightarrow \infty.
 \end{eqnarray}
Here it was used the fact that $1<\ell \leq m$. Throughout this work $\langle \cdot,\cdot\rangle$ denotes the dual pair between $X^*$ and $X$. Furthermore, the operator $L$ is monotone, that is, we have that $\langle L(u) - L(v), u - v \rangle \geq 0$ holds true for each $u, v \in X$. Indeed, by using hypothesis $(\varphi_2)$ we obtain that the function $t \mapsto \Phi(t)$ is convex. In particular, we obtain that the functional $R: X \to \mathbb{R}$ given by
\[
R(u):=\iint\Phi(\vert D_{s}u\vert)\,\mathrm{d}\mu+\int V(x)\Phi(u)\,\mathrm{d}x
\]
is convex. Clearly, the functional $R$ is in $C^1$ class and taking into account the last assertion we obtain that $L$ is monotone. Hence the continuous operator $L$ is coercive and monotone proving that problem \eqref{m1} has at least one solution for each $f \in X^*$. For the uniqueness we apply also the theory of monotone operators, see \cite{Zeidler}. More precisely, one may check that $L$ is strictly monotonic, that is,  $\langle L(u) - L(v), u - v \rangle > 0$ holds true for each $u, v \in X$ such that $u \neq v$. This can be done thanks to hypothesis $(\varphi_2)$ which says that
$t \mapsto t \varphi(t)$ is strictly increasing. Hence the functional $R$ given above is strictly convex.  As a consequence, assuming that $u_1, u_2$ are solutions for the problem \eqref{m1}, we obtain
\begin{equation}
\langle L(u_1) - L(u_2), u_1 - u_2 \rangle = \langle L(u_1), u_1 - u_2 \rangle - \langle L(u_2), u_1 - u_2 \rangle = 0.
\end{equation}
Using the fact that $L$ is strictly monotonic we conclude that $u_1 = u_2$ in $X$.
Therefore, problem \eqref{m1} admits a unique solution for each $f \in X^*$. Define the operator $S: X^* \rightarrow X$ given by $S(f) = u$ where $f \in X^*$ and $u$ is the unique solution for the problem \eqref{m1}. Recall that
\begin{equation}
\|S (f)\| \leq C \|f\|_{X^*}, \quad f \in X^*,
\end{equation}
holds true for some $C > 0$ showing that $S$ is a continuous operator. It is easy to verify that $S = L^{-1}$.
Now, let us prove that $S = L^{-1}:X^{\prime}\rightarrow X$ is continuous. Let $(f_{n})\subset X^{\prime}$ be such that $f_{n}\rightarrow f$ in $X^{\prime}$. Thus, there exist $u\in X$ and $(u_{n})\subset X$ such that $L(u_{n})=f_{n}$ and $L(u)=f$. We split the proof into three claims.

\vspace{0,3cm}

\noindent\textit{Claim 1.} $(u_{n})$ is bounded in $X$.

\vspace{0,3cm}

In fact, it follows from \eqref{m2} and the fact that $(f_{n})$ is bounded in $X^{\prime}$ that
 \[
  \ell\min\{\Vert u_{n}\Vert^{\ell},\Vert u_{n}\Vert^{m} \}\leq \langle L(u_{n}),u_{n}\rangle=\langle f_{n},u_{n}\rangle\leq C\Vert u_{n}\Vert,
 \]
which implies that $(u_{n})$ is bounded in $X$.

\vspace{0,3cm}

\noindent\textit{Claim 2.} $u_{n}(x)\rightarrow u(x)$, almost everywhere in $\mathbb{R}^{N}$.

\vspace{0,3cm}

Since $(u_{n})$ is bounded, one may deduce that
 \begin{eqnarray}
   0 & \leq & \int V_{0}[\varphi(u_{n})u_{n}-\varphi(u)u][u_{n}-u]\,\mathrm{d}x\nonumber\\
       & \leq & \int V(x) [\varphi(u_{n})u_{n}-\varphi(u)u][u_{n}-u]\,\mathrm{d}x\nonumber\\
       & \leq & \langle L(u_{n})-L(u),u_{n}-u\rangle\nonumber\\
       & = & \langle f_{n}-f,u_{n}-u\rangle\nonumber\\
       & \leq & C\Vert f_{n}-f\Vert\rightarrow0\nonumber.
 \end{eqnarray}
Thus, we conclude that
 \[
  0\leq (\varphi(u_{n})u_{n}-\varphi(u)u)(u_{n}-u)\rightarrow0, \quad \mbox{in } L^{1}(\mathbb{R}^{N}),
 \]
which implies that $u_{n}(x)\rightarrow u(x)$ almost everywhere in $\mathbb{R}^{N}$.

\vspace{0,3cm}

\noindent\textit{Claim 3.} There exist $(g_{1,n})\subset L^{1}(\mathbb{R}^{2N})$, $g_1\in L^{1}(\mathbb{R}^{2N})$, $(g_{2,n})\subset L^{1}(\mathbb{R}^{N})$ and  $g_2\in L^{1}(\mathbb{R}^{N})$ such that
 \[
  \mathcal{G}_1(u_{n}):=\Phi(\vert D_{s}u_{n}-D_{s}u\vert)\leq g_{1,n}\qquad\mbox{\and}\qquad{\mathcal{G}_2(u_{n}):=V(.)\Phi(u_n-u)\leq g_{2,n}}
 \]
satisfying the following identities
 \[
  \lim_{n \to \infty} g_{1,n}(x,y)=g_1(x,y), \quad \mbox{almost everywhere in } \mathbb{R}^{2N}
 \]
 and
 	\[
 	\lim_{n \to \infty} g_{2,n}(x)=g_2(x), \quad \mbox{almost everywhere in } \mathbb{R}^{N}.
 	\]
In fact, by using the fact that $\Phi\in\Delta_{2}$ and $\Phi$ is convex, we deduce that
 \begin{align*}
  \mathcal{G}_1(u_{n})\leq 2^{m-1}&\left[\frac{1}{\ell}\varphi(\vert D_{s}u_{n}\vert)\vert D_{s}u_{n}\vert+\Phi(\vert D_{s}u\vert) \right]:=g_{1,n}
 \end{align*}
 	\begin{align*}
 	\mathcal{G}_2(u_{n})\leq 2^{m-1}&\left[\frac{1}{\ell}V(x)\varphi(u_{n})u_{n}^{2}+V(x)\Phi(u) \right]:=g_{2,n}
 	\end{align*}
Notice also that
 \[
  \lim_{n \to \infty}g_{1,n}(x)=2^{m-1}\left[\frac{1}{\ell}\varphi(\vert D_{s}u\vert)\vert D_{s}u\vert +\Phi(\vert D_{s}u\vert) \right]:=g_1,
 \]
 	\[
 	\lim_{n \to \infty}g_{2,n}(x)=2^{m-1}\left[\frac{1}{\ell}V(x)\varphi(u)u^{2}+V(x)\Phi(u) \right]:=g_2,
 	\]
which proves \textit{Claim 3}.

Finally, since $u_{n}(x)\rightarrow u(x)$ almost everywhere in $\mathbb{R}^{N}$ and $(u_{n})$ is bounded in $X$, it follows that $u_{n}\rightharpoonup u$ weakly in $L_{\Phi}(\mathbb{R}^{N})$. Since $(D_s u_n)$ is a bounded sequence in $L_{\Phi}(\mathbb{R}^{2N}, \mathrm{d}\mu)$ it follows that $D_s u_n \rightharpoonup w$ in $L_{\Phi}(\mathbb{R}^{2N}, \mathrm{d}\mu)$ for some $w \in L_{\Phi}(\mathbb{R}^{2N}, \mathrm{d}\mu)$. In fact, $w = D_s u$ a. e. in $\mathbb{R}^{2N}$ proving that $D_{s}u_{n}\rightharpoonup D_s u$ in $L_{\Phi}(\mathbb{R}^{2N};\mathrm{d}\mu)$. Here was used the fact that the set $\{ (x,y) \in \mathbb{R}^N \times \mathbb{R}^N :  x = y\}$ has zero Lebesgue measure in $\mathbb{R}^N \times \mathbb{R}^N$ and $D_s u_n \to D_s u$ a.e in $\mathbb{R}^{2N}$.  Thus, one may conclude that $\langle f_{n},u_{n}\rangle\rightarrow\langle f,u\rangle$, as $n\rightarrow\infty$. Therefore, it follows from Lebesgue generalized Theorem that
 \[
  \iint \Phi(\vert D_{s}u_{n}-D_{s}u\vert)\,\mathrm{d}\mu +\int V(x)\Phi(\vert u_{n}-u\vert)\,\mathrm{d}x\rightarrow0, \quad \mbox{as } n\rightarrow\infty.
 \]
Since $\Phi\in\Delta_{2}$, we conclude that $u_{n}=L^{-1}(f_n)\rightarrow u=L^{-1}(f)$ in $X$. This ends the proof.

\subsection{Proof of Theorem \ref{lions} (Lions' Lemma type result)}

	Let $(u_{n})\subset X$ be satisfying \eqref{lionss}.  Since $\Psi<<\Phi_{*}$, for given $\varepsilon>0$, there exists $T>0$ such that
	\begin{equation}\label{l1}
	\frac{\Psi(\vert t\vert)}{\Phi_{*}(\vert t\vert)}\leq \frac{\varepsilon}{3\kappa}, \quad \vert t\vert \geq T,
	\end{equation}
	where
	 \[
	 \kappa=\sup_n\int_{\mathbb{R}^{N}}\Phi_{*}(\vert u_{n}\vert)\,\mathrm{d}x.
	 \]
	From \eqref{mla1b}, there exists $\delta>0$ such that
	\begin{eqnarray}\label{l3}
	\frac{\Psi(|t|)}{\Phi(|t|)}\leq \frac{\varepsilon}{3\theta},\qquad |t|<\delta,
	\end{eqnarray}
	where $$\theta:=\sup_{n}\int_{\mathbb{R}^N}\Phi(|u_n|)\,\mathrm{d}x.$$
	Let us write 
	\begin{equation}\label{l2}	
	 \int_{\mathbb{R}^{N}}\Psi(\vert u_{n}\vert)\,\mathrm{d}x=\int_{\{\vert u_{n}\vert\leq \delta\}}\Psi(\vert u_{n}\vert)\,\mathrm{d}x+\int_{\{\delta<|u_n|<T\}}\Psi(\vert u_{n}\vert)\,\mathrm{d}x+\int_{\{\vert u_{n}\vert\geq T\}}\Psi(\vert u_{n}\vert)\,\mathrm{d}x.
	\end{equation}
	In view of \eqref{l1} we have
	\begin{equation}\label{mlb0}
	\int_{\{\vert u_{n}\vert\geq T\}}\Psi(\vert u_{n}\vert)\,\mathrm{d}x\leq \frac{\varepsilon}{3\kappa}\int_{\mathbb{R}^{N}}\Phi_{*}(\vert u_{n}\vert)\,\mathrm{d}x\leq \frac{\varepsilon}{3}.
	\end{equation}
    It follows from \eqref{l3} that
	\begin{eqnarray}\label{mlb1}
		\int_{\{\vert u_{n}\vert\leq\delta\}}\Psi(\vert u_{n}\vert)\,\mathrm{d}x \leq \frac{\varepsilon}{3\theta}\int_{\{|u_n|\leq\delta\}}\Phi(|u_n|)\,\mathrm{d}x\leq \frac{\varepsilon}{3}.
 	\end{eqnarray}

 	There are two cases to consider. In the first one we suppose that
 	\begin{equation}\label{ed1}
 	\lim_{n \to \infty} |\{\delta<|u_n|<T\}|= 0.
 	\end{equation}
 	Thus, there exists $n_{0}\in\mathbb{N}$ such that
 	 \begin{equation}\label{ml10}
 	  |\{\delta<|u_n|<T\}|<\dfrac{\Phi(\delta)}{\Psi(T)\Phi(T)}\dfrac{\varepsilon}{3}, \quad n\geq n_{0}.
 	 \end{equation}
 	Hence, we obtain that
 	\begin{equation}\label{ed2}
    |\{\delta<|u_n|<T\}| \leq \frac{1}{\Phi(\delta)}\int_{\{\delta<|u_n|<T\}}\Phi(u_n)\,\mathrm{d}x \leq \frac{\Phi(T)}{\Phi(\delta)}  |\{\delta<|u_n|<T\}|.
 	\end{equation}
 	For $n\geq n_{0}$, it follows from \eqref{ml10} and \eqref{ed2} that
 	 \begin{equation}\label{ml12}
 	 \int_{\{\delta<|u_n|<T\}}\Psi(u_{n})\,\mathrm{d}x\leq \dfrac{\Psi(T)}{\Phi(\delta)}\int_{\{\delta<|u_n|<T\}}\Phi(u_{n})\,\mathrm{d}x<\dfrac{\varepsilon}{3}.
 	 \end{equation}
 	Therefore, by using \eqref{mlb0}, \eqref{mlb1} and \eqref{ml12}, we deduce that
 	\begin{equation}
 		\int \Psi(u_n)\,\mathrm{d}x \leq \varepsilon
 	\end{equation}
 	holds true for each $\varepsilon > 0$. This finishes the proof for the first case.

 	In the second case, up to a subsequence, we assume that
 	\begin{equation}
 	\lim_{n \to \infty} |\{\delta<|u_n|<T\}| = L \in (0, \infty).
 	\end{equation}
   Let us prove that this case does not hold. For this purpose, we prove the following claim:

 	{\bf Claim:} There exist $y_0\in\mathbb{R}^N$ and $\sigma > 0$ such that \begin{equation}
 	0 < \sigma \leq |\{\delta<|u_{n}|<T\}\cap B_r(y_0)|
 	\end{equation}
 	holds true for a subsequence of $(u_n)$ which is also labeled as $(u_n)$. The proof follows arguing by contradiction. Indeed, for each $\varepsilon > 0, k \in \mathbb{N}$ we obtain that
 	\begin{equation}\label{ed7}
 	|\{\delta<|u_{n}|<T\}\cap B_r(y)| < \frac{\varepsilon}{2^k}
 	\end{equation}
 	holds for all $y \in \mathbb{R}^N$. Notice also that the last estimate holds for any subsequence of $(u_n)$. Without loss of generality we take just the sequence $(u_n)$. Now, choose $(y_k) \subset \mathbb{R}^N$ such that $ \cup_{k=1}^{\infty} B_r (y_k) = \mathbb{R}^N$ and using \eqref{ed7}, we write
 	\begin{eqnarray}
 	|\{\delta<|u_n|<T\}| &=& |\{\delta<|u_n|<T\} \cap (\cup_{k=1}^{\infty} B_r (y_k))|  \nonumber \\
 	&\leq&
 	\sum_{k=1}^{\infty} 	|\{\delta<|u_n|<T\} \cap B_r(y_k)| \leq
 	\sum_{k=1}^{\infty} 	\frac{\varepsilon}{2^k} = \varepsilon
 	\end{eqnarray}
 	where $\varepsilon > 0$ is arbitrary. Up to a subsequence it follows from the last estimate that
 	\begin{equation}
 	0 < L  = \lim_{n \to \infty} |\{\delta<|u_n|<T\}| \leq \varepsilon
 	\end{equation}
 	which does not make sense for $\varepsilon \in (0, L)$. Thus the proof of Claim follows.
 	
 	At this stage, by using Claim and \eqref{lionss}, we observe that
 	\begin{eqnarray}
 	0 &<& \sigma \leq |\{\delta<|u_{n}|<T\}\cap B_r(y_0)| \leq \frac{1}{\Phi(\delta)} \int_{B_r(y_0)} \Phi(u_n)\,\mathrm{d}x \nonumber \\
 	&\leq& \frac{1}{\Phi(\delta)} \sup_{y \in \mathbb{R}^N} \int_{B_r(y)} \Phi(u_n)\,\mathrm{d}x  \to 0 \nonumber
 	\end{eqnarray}
 	as $n \to \infty$. This contradiction proves that second case is impossible. In other words, we prove that $L = 0$ is always verified. Hence, our result follows from the first  case. This ends the proof.

\section{Part II -- Applications}\label{s4}

\subsection{The variational framework and the Nehari manifold}\label{s1}

In this section we shall consider the variational framework for our main problem. Namely, associated to Problem \eqref{p1} we have the energy functional $J:X\rightarrow\mathbb{R}$ defined by
 \[
  J(u):=\iint \Phi\left(|D_s u|\right)\,\mathrm{d}\mu+\int V(x)\Phi(|u|)\;\mathrm{d}x-\int F(x,u)\;\mathrm{d}x,
 \]
where $\mathrm{d}\mu(x,y):=\frac{ \mathrm{d}x\mathrm{d}y}{|x-y|^N}$. It follows from \ref{f0} and \ref{f1} that for any $\varepsilon>0$, there exists $C_{\varepsilon}>0$ such that
 \begin{equation}\label{growth1}
  F(x,t)\leq \varepsilon\Phi(t)+C_{\varepsilon}\Phi_{*}(t), \quad \mbox{for all} \hspace{0,2cm} (x,t)\in\mathbb{R}^{N}\times[0,+\infty).
 \end{equation}
Thus, by using standard arguments we have that $J\in C^{1}(X)$ and its derivative is given by
 \begin{align*}
  \langle J^{\prime}(u),h\rangle&=\iint\varphi\left(|D_s u|\right)|D_s u|  D_s h \,\mathrm{d}\mu+\int V(x)\varphi(|u|)uh\;\mathrm{d}x-\int f(x,u)h\,\mathrm{d}x,\quad u, h \in X.
 \end{align*}

In this section we shall consider the Nehari method for the energy functional $J$. Recall that associated to Problem \eqref{p1} we have the Nehari manifold
 \[
  \mathcal{N}:=\left\{u\in X\backslash\{0\}:J^{\prime}(u)u=0\right\}.
 \]

\noindent Notice that for any $u\in\mathcal{N}$ we obtain
 \begin{equation}\label{emj3}
  \iint\varphi\left(|D_s u|\right)|D_s u|^{2}\,\mathrm{d}\mu+\int V(x)\varphi(|u|)u^{2}\;\mathrm{d}x=\int f(x,u)u\;\mathrm{d}x.
 \end{equation}

\begin{lemma}\label{n1}
	$\mathcal{N}$ is a $C^{1}$-manifold.
\end{lemma}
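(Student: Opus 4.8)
The plan is to show that $\mathcal N$ is a $C^1$-manifold by exhibiting it as a regular level set of a $C^1$ functional and verifying that $0$ is a regular value. Define $G:X\setminus\{0\}\to\mathbb R$ by $G(u):=\langle J'(u),u\rangle$, so that $\mathcal N=G^{-1}(0)\setminus\{0\}$. Since $J\in C^1(X)$ and $u\mapsto\langle J'(u),u\rangle$ is a composition of $C^1$ maps (indeed, under hypotheses $(\varphi_1),(\varphi_2),(\varphi_4)$ the modular terms are $C^1$ in the strong sense needed, and \ref{f0}--\ref{f1} together with the growth bound \eqref{growth1} guarantee the Nehari functional $G$ is of class $C^1$), it suffices to prove that $\langle G'(u),u\rangle\neq 0$ for every $u\in\mathcal N$. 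Then the implicit function theorem gives that $\mathcal N$ is a $C^1$-submanifold of $X$ of codimension one.

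First I would compute $\langle G'(u),u\rangle$ explicitly. Writing $G(u)=\iint\varphi(|D_su|)|D_su|^2\,\mathrm d\mu+\int V(x)\varphi(|u|)u^2\,\mathrm dx-\int f(x,u)u\,\mathrm dx$, differentiation in the direction $u$ yields
\begin{align*}
\langle G'(u),u\rangle&=\iint\Big[\varphi'(|D_su|)|D_su|^3+2\varphi(|D_su|)|D_su|^2\Big]\,\mathrm d\mu\\
&\quad+\int V(x)\Big[\varphi'(|u|)|u|\,u^2+2\varphi(|u|)u^2\Big]\,\mathrm dx-\int\Big[f_t(x,u)u^2+f(x,u)u\Big]\,\mathrm dx.
\end{align*}
Using $(\varphi_4)$, namely $\varphi'(t)t/\varphi(t)\le m-2$, the first two bracketed integrands are bounded above by $m\,\varphi(|D_su|)|D_su|^2$ and $m\,V(x)\varphi(|u|)u^2$ respectively; combining with the Nehari identity \eqref{emj3} gives
\[
\langle G'(u),u\rangle\le m\int f(x,u)u\,\mathrm dx-\int\big[f_t(x,u)u^2+f(x,u)u\big]\,\mathrm dx=\int\Big[(m-1)f(x,u)u-f_t(x,u)u^2\Big]\,\mathrm dx.
\]
Hypothesis \ref{f4}, which says $t\mapsto f(x,t)/(|t|^{m-2}t)$ is strictly increasing for $t>0$ and strictly decreasing for $t<0$, translates (by differentiating and clearing the positive factor $|t|^{m-2}$) into $f_t(x,t)t>(m-1)f(x,t)t$ for all $t\neq 0$. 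Hence on $\{u\neq0\}$ the integrand $(m-1)f(x,u)u-f_t(x,u)u^2$ is strictly negative, and since $u\in\mathcal N$ is not identically zero the integral is strictly negative; therefore $\langle G'(u),u\rangle<0$, in particular nonzero.

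I expect the main obstacle to be the justification that $G$ is genuinely $C^1$ on $X\setminus\{0\}$ — that is, that one may differentiate the nonlocal modular $\iint\varphi(|D_su|)|D_su|^2\,\mathrm d\mu$ and the nonlinear term $\int f(x,u)u\,\mathrm dx$ under the integral sign with continuous Fréchet derivatives. This requires the growth control from $(\varphi_3)$ (implied by $(\varphi_4)$) together with Proposition \ref{lema_naru} and Lemma \ref{V0ineq} for the modular terms, and the subcritical growth \eqref{growth1} plus the embedding $X\hookrightarrow L_{\Phi_*}(\mathbb R^N)$ and $X\hookrightarrow L_\Phi(\mathbb R^N)$ for the nonlinearity; the $\Delta_2$-condition on $\Phi$ and $\Phi_*$ makes the associated Nemytskii operators continuous. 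Once this regularity is in hand, the argument above is routine: $\mathcal N$ is closed in $X\setminus\{0\}$, $0$ is a regular value of $G$ by the strict inequality $\langle G'(u),u\rangle<0$, and the conclusion follows. A minor point to address is that $\mathcal N$ is bounded away from $0$ in norm — which follows from \eqref{growth1}, Proposition \ref{lema_naru} and the embeddings, ensuring $X\setminus\{0\}$ is an adequate ambient open set and that the manifold structure is global.
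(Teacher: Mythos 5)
Your proposal is correct and follows essentially the same route as the paper: both define the Nehari functional $\tilde J(u)=\langle J'(u),u\rangle$, use $(\varphi_4)$ to bound $\langle \tilde J'(u),u\rangle$ by $m$ times the modular terms, substitute the Nehari identity \eqref{emj3} to reduce to $\int[(m-1)f(x,u)u-f'(x,u)u^2]\,\mathrm dx$, and invoke the strict monotonicity of $t\mapsto f(x,t)/(|t|^{m-2}t)$ to conclude this is strictly negative, so $0$ is a regular value. Your additional remarks on the $C^1$-regularity of the Nehari functional and on $\mathcal N$ being bounded away from the origin are consistent with the paper, which treats the latter point in a separate lemma.
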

\begin{proof}
	Let us define the $C^{1}$-functional $\tilde{J}:X\backslash\{0\}\rightarrow\mathbb{R}$ given by $\tilde{J}(u)=J^{\prime}(u)u$, that is
	 \[
	  \tilde{J}(u)=\iint\varphi\left(|D_s u|\right)|D_s u|^{2}\,\mathrm{d}\mu+\int V(x)\varphi(|u|)u^{2}\;\mathrm{d}x-\int f(x,u)u\;\mathrm{d}x.
	 \]
	If $u\in\mathcal{N}$, then by using \eqref{emj2} we can deduce that
	 \begin{align*}
	  \tilde{J}^{\prime}(u)u\leq m\left\{\iint\varphi\left(|D_s u|\right)|D_s u|^{2}\,\mathrm{d}\mu+\int V(x)\varphi(|u|)u^{2}\;\mathrm{d}x\right\}\\
	  -\int[f^{\prime}(x,u)u^{2}+f(x,u)u]\;\mathrm{d}x,
	 \end{align*}
	which together with \eqref{emj3} implies that
	 \begin{equation}\label{emj4}
	  \tilde{J}^{\prime}(u)u\leq \int[(m-1)f(x,u)u-f^{\prime}(x,u)u^{2}]\;\mathrm{d}x.
	 \end{equation}
	On the other hand, if $t>0$ then it follows from assumption \ref{f4} that
	 \begin{equation*}
	  0<\frac{d}{dt}\left[\frac{f(x,t)}{t^{m-1}}\right]=\frac{t^{m-3}[f^{\prime}(x,t)t^{2}-(m-1)f(x,t)t]}{t^{2(m-1)}}.
     \end{equation*}
    Analogously we obtain the same positivity when $t<0$. Thus, it follows from \eqref{emj4} that $\tilde{J}^{\prime}(u)u<0$, for all $u\in\mathcal{N}$. Therefore, $0$ is a regular value of $\tilde{J}$ which implies that $\mathcal{N}$ is a $C^{1}$-manifold.
\end{proof}

\begin{lemma}\label{n1}
The Nehari manifold $\mathcal{N}$ is away from zero. In particular, $\mathcal{N}$  is closed set in $X$.
\end{lemma}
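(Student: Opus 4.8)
The statement has two parts: the substantive one is the uniform bound $\inf\{\|u\|:u\in\mathcal{N}\}>0$, and closedness of $\mathcal{N}$ will then follow almost for free. The plan for the bound is the usual Nehari bootstrap: for $u\in\mathcal{N}$ one plays the ellipticity lower bound from $(\varphi_3)$ against the subcritical growth of $f$, and the strict gap between the exponent $m$ and the critical exponent $\ell^*$ forces $\|u\|$ to stay away from $0$.

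First I would fix $u\in\mathcal{N}$, so that the Nehari identity \eqref{emj3} holds. On its left-hand side I use $(\varphi_3)$ in the form $\ell\,\Phi(t)\le t^{2}\varphi(t)$ to get $\ell\big[\iint\Phi(|D_s u|)\,\mathrm{d}\mu+\int V(x)\Phi(|u|)\,\mathrm{d}x\big]\le\int f(x,u)u\,\mathrm{d}x$. For the right-hand side I would establish, arguing as in the derivation of \eqref{growth1} from \ref{f0} and \ref{f1}, the pointwise bound $f(x,t)t\le\varepsilon\,\Phi(|t|)+C_\varepsilon\,\Phi_*(|t|)$ for all $(x,t)$ and every $\varepsilon>0$; here \ref{f1} handles $|t|$ near $0$, while \ref{f0}, through $\Psi<<\Phi_*$, handles the remaining range uniformly in $x$. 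Then, using $(V_0)$ to write $\int\Phi(|u|)\,\mathrm{d}x\le V_0^{-1}\int V(x)\Phi(|u|)\,\mathrm{d}x$ and choosing $\varepsilon=\ell V_0/2$, the $\varepsilon$-term is absorbed on the left, leaving $\tfrac{\ell}{2}\big[\iint\Phi(|D_s u|)\,\mathrm{d}\mu+\int V(x)\Phi(|u|)\,\mathrm{d}x\big]\le C_\varepsilon\int\Phi_*(|u|)\,\mathrm{d}x$.

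Next I pass from modulars to norms. By Lemma \ref{V0ineq} and Lemma \ref{V.ineq} the left side dominates $\tfrac{\ell}{2}\big(\xi^{-}([u]_{s,\Phi})+\xi^{-}(\|u\|_{V,\Phi})\big)$, while the continuous embedding $X\hookrightarrow L_{\Phi_*}(\mathbb{R}^N)$, the $\Delta_2$-estimate of Proposition \ref{lema_naru} applied to $\Phi_*$, and the fact that $m<\ell^*$ (from $(\varphi_3)$) give constants $C>0$ and $q>m$ with $\int\Phi_*(|u|)\,\mathrm{d}x\le C\|u\|^{q}$ whenever $\|u\|\le1$. Restricting to $u\in\mathcal{N}$ with $\|u\|\le1$, so that $[u]_{s,\Phi},\|u\|_{V,\Phi}\le1$ and hence $\xi^{-}(\cdot)=(\cdot)^{m}$ on these two quantities, and using the convexity bound $a^{m}+b^{m}\ge2^{1-m}(a+b)^{m}$, the inequality reduces to $\tfrac{\ell}{2^{m}}\|u\|^{m}\le C\,C_\varepsilon\|u\|^{q}$, that is $\|u\|^{q-m}\ge c_{0}>0$ with $c_{0}:=\ell/(2^{m}C\,C_\varepsilon)$. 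Since $q>m$, this is incompatible with $\|u\|$ being arbitrarily small, so there is $\rho>0$ with $\|u\|\ge\rho$ for every $u\in\mathcal{N}$.

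Closedness is then immediate: if $(u_n)\subset\mathcal{N}$ and $u_n\to u$ in $X$, then $\|u\|=\lim_n\|u_n\|\ge\rho>0$, so $u\neq0$, and since $J\in C^{1}(X)$ the map $v\mapsto\langle J'(v),v\rangle$ is continuous, whence $\langle J'(u),u\rangle=\lim_n\langle J'(u_n),u_n\rangle=0$ and $u\in\mathcal{N}$. The only point requiring some care is the pointwise growth estimate for $f(x,t)t$, and in particular obtaining it uniformly in $x$ on the intermediate range of $t$ — this is exactly where \ref{f0} enters, the endpoint behaviours alone being insufficient; I also note that, unlike in the first-order Orlicz--Sobolev setting, the whole estimate is routed directly through the critical space $L_{\Phi_*}(\mathbb{R}^N)$ and never relies on interpolation between $W^{s,\Phi}(\mathbb{R}^N)$ and $L_\Phi(\mathbb{R}^N)$.
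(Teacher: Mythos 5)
Your argument is correct and its skeleton is the same as the paper's: start from the Nehari identity \eqref{emj3}, use $(\varphi_3)$ to bound the left-hand side from below by $\ell$ times the sum of the modulars, pass to norms via Lemmas \ref{V0ineq} and \ref{V.ineq}, and win because the right-hand side carries a power strictly larger than $m$. The one substantive difference is which $N$-function you let dominate $\int f(x,u)u\,\mathrm{d}x$. The paper uses only \ref{f0} to bound this term by $C\int\Psi(|u|)\,\mathrm{d}x$ and then invokes $\xi^{+}_{\Psi}(\|u\|)=\max\{\|u\|^{\ell_\Psi},\|u\|^{m_\Psi}\}$, so the exponent gap it exploits is $\ell_\Psi>m$ from \eqref{psi1}; no absorption step is needed beyond that. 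You instead split $f(x,t)t\le\varepsilon\Phi(|t|)+C_\varepsilon\Phi_*(|t|)$ using \ref{f1} near the origin and \ref{f0} together with $\Psi<<\Phi_*$ elsewhere, absorb the $\varepsilon$-term via $(V_0)$, and route the remainder through $L_{\Phi_*}(\mathbb{R}^N)$, so your exponent gap is $\ell^*>m$ from $(\varphi_3)$ — exactly the mechanism the paper itself uses in Lemma \ref{n2} to show $g(t)>0$ for small $t$. Both routes are legitimate under the stated hypotheses; the paper's is marginally shorter since it never touches $\Phi_*$ or \ref{f1}, while yours does not use the lower bound $m<\ell_\Psi$ at all and so would survive even if $\Psi$ were only controlled from above. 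Your closedness argument (uniform norm bound plus continuity of $v\mapsto\langle J'(v),v\rangle$) is the standard completion and is fine; the paper leaves it implicit.
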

\begin{proof}
It follows from \eqref{growth1} and \eqref{emj3} that
\begin{equation}
\iint\varphi\left(|D_s u|\right)|D_s u|^{2}\,\mathrm{d}\mu+\int V(x)\varphi(|u|)u^{2}\;\mathrm{d}x \leq \varepsilon \int \Phi(u) \mathrm{d}x + C_\varepsilon \int \Psi (u) \mathrm{d}x, \quad u \in \mathcal{N}.
\end{equation}
  Note that, by using assumptions $(\varphi_3)$ and $(f_0)$ imply that
\begin{align*}
\iint \Phi\left(|D_s u|\right)\,\mathrm{d}\mu+\int V(x)\Phi(|u|)\;\mathrm{d}x& \leq C_{\epsilon} \int \Psi(|u|)\,\mathrm{d}x, \quad u \in \mathcal{N}.
\end{align*}
The last inequality together with Lemmas \ref{V0ineq} and \ref{V.ineq} leads to
$$
\ell\left(\xi^-([u]_{s,\Phi})+\xi^-(\|u\|_{V,\Phi})\right)\leq C\xi^+_\Psi(\|u\|),
$$
where $\xi^+_\Psi(t)=\max\{t^{\ell_\Psi},t^{m_\Psi}\}$.
As a consequence, there exists $c > 0$ such that
$\|u\| \geq c$ holds true for any $u \in \mathcal{N}$. Hence $\mathcal{N}$ is a closed set.
This ends the proof.
\end{proof}

\begin{lemma}\label{n2}
	For given $u\in E\backslash\{0\}$, there exists a unique $t_{0}>0$, depending only on $u$, such that
	 \[
	  t_{0}u\in\mathcal{N} \quad \mbox{and} \quad J(t_{0}u)=\max_{t\geq0}J(tu).
	 \]
\end{lemma}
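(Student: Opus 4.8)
The plan is to study the fibering map $g:(0,\infty)\to\mathbb{R}$ defined by $g(t):=J(tu)$ for fixed $u\in X\setminus\{0\}$ and show it has a unique positive critical point which is its global maximum. First I would write, using Proposition \ref{lema_naru} and Lemma \ref{V0ineq},
\[
g(t)=\iint\Phi\left(t|D_s u|\right)\,\mathrm{d}\mu+\int V(x)\Phi(t|u|)\,\mathrm{d}x-\int F(x,tu)\,\mathrm{d}x,
\]
and observe that $g(0^+)=0$. Near $t=0$: from \eqref{emj2} we have the homogeneity-type bound $\Phi(t\rho)\ge t^{\ell}\Phi(\rho)$ for $t\le 1$ applied to the gradient and potential terms, while hypothesis \ref{f1} together with the growth estimate \eqref{growth1} gives $F(x,tu)\le \varepsilon\Phi(tu)+C_\varepsilon\Phi_*(tu)$; choosing $\varepsilon$ small and using $\ell>1$ shows $g(t)>0$ for $t>0$ small. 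For $t$ large: by \ref{f5} (the Ambrosetti--Rabinowitz-type condition) we have $F(x,v)\ge c_1|v|^{\theta}-c_2$ with $\theta>m$, hence $\int F(x,tu)\,\mathrm{d}x\gtrsim t^{\theta}$, which dominates the gradient and potential terms that grow at most like $t^{m}$ (by Proposition \ref{lema_naru} with exponent $m$ for $t\ge 1$); therefore $g(t)\to-\infty$ as $t\to\infty$. Consequently $g$ attains a positive global maximum at some $t_0>0$ with $g'(t_0)=0$, and the identity $g'(t)=\tfrac1t\langle J'(tu),tu\rangle$ shows $t_0u\in\mathcal N$.

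For uniqueness I would exploit hypotheses \ref{f3} and $(\varphi_4)$. The critical point equation $g'(t_0)=0$ reads
\[
\iint\varphi(t_0|D_s u|)t_0|D_s u|^2\,\mathrm{d}\mu+\int V(x)\varphi(t_0|u|)t_0|u|^2\,\mathrm{d}x=\int f(x,t_0u)u\,\mathrm{d}x.
\]
Dividing through by $t_0^{m-1}$ and writing this as
\[
\iint\frac{\varphi(t_0|D_s u|)|D_s u|^2}{t_0^{m-2}}\,\mathrm{d}\mu+\int V(x)\frac{\varphi(t_0|u|)|u|^2}{t_0^{m-2}}\,\mathrm{d}x=\int\frac{f(x,t_0u)}{(t_0|u|)^{m-1}}|u|^{m}\,\mathrm{d}x,
\]
I would argue that the left-hand side is a strictly decreasing function of $t_0$: from $(\varphi_4)$ the map $\tau\mapsto \varphi(\tau)\tau^{2}/\tau^{m}=\varphi(\tau)/\tau^{m-2}$ is non-increasing (its logarithmic derivative is $(\tau\varphi'(\tau)/\varphi(\tau)-(m-2))/\tau\le 0$), so each integrand on the left is non-increasing in $t_0$, while hypothesis \ref{f3} says $\tau\mapsto f(x,\tau)/|\tau|^{m-2}\tau$ is strictly increasing for $\tau>0$, making the right-hand side strictly increasing in $t_0$. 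Hence the two sides can coincide for at most one value of $t_0>0$, giving uniqueness. A technical point is that strict monotonicity on one side (e.g.\ from \ref{f3}) suffices, since $u\not\equiv 0$ guarantees the relevant integrals are positive; I would spell this out to make the "at most one" conclusion rigorous.

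The main obstacle I anticipate is handling the growth estimates uniformly when $\Phi$ (hence $\Phi_*$) is a genuine $N$-function rather than a power: one must be careful to split integrals according to $\{|tu|\le 1\}$ and $\{|tu|>1\}$ and use the correct one of the bounds $\xi^-(t)\Phi(\rho)\le\Phi(t\rho)\le\xi^+(t)\Phi(\rho)$ from Proposition \ref{lema_naru} on each piece, and similarly for $\Phi_*$ via \ref{f0}. Once the coercivity-type behaviour $g(t)\to-\infty$ and positivity near zero are established cleanly, existence of $t_0$ is immediate from continuity of $g$, and the monotonicity argument above closes uniqueness and the maximality claim $J(t_0u)=\max_{t\ge 0}J(tu)$.
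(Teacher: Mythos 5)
Your overall strategy --- analyze the fibering map $g(t)=J(tu)$, show $g>0$ near $t=0$ and $g<0$ for large $t$, and obtain uniqueness from the opposite monotonicities of the two sides of $g'(t)=0$ after dividing by a power of $t$ --- is exactly the paper's, and your uniqueness argument coincides with the one given there (note only that in the paper's labelling the strict monotonicity of $t\mapsto f(x,t)/(|t|^{m-2}t)$ is hypothesis \ref{f4}, while \ref{f3} is the superlinearity condition you do not actually need for this step). For $t\to\infty$ you invoke the Ambrosetti--Rabinowitz condition \ref{f5} to get $F(x,v)\gtrsim|v|^{\theta}$, whereas the paper uses \ref{f3} together with Fatou's lemma to show $F(x,s)/|s|^{m}\to\infty$ and hence $g(t)/t^{m}\to-\infty$. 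Both routes work; yours additionally requires the (standard but unstated) fact that $F(x,\cdot)$ is positive and bounded away from zero at $|v|=1$ uniformly enough in $x$ to integrate the differential inequality, which the Fatou argument avoids.

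The one genuine gap is the behaviour near $t=0$. The inequality you quote, $\Phi(t\rho)\ge t^{\ell}\Phi(\rho)$ for $t\le1$, has the wrong exponent: Proposition \ref{lema_naru} gives $\Phi(t\rho)\ge\xi^{-}(t)\Phi(\rho)=t^{m}\Phi(\rho)$ for $t\le1$, the exponent $\ell$ occurring only in the upper bound $\xi^{+}(t)=t^{\ell}$ on that range. Hence the positive modular terms of $g(t)$ are bounded below only by $t^{m}[\cdots]$, while the negative term $\varepsilon\int\Phi(t|u|)\,\mathrm{d}x$ is bounded above only by $\varepsilon\,t^{\ell}\int\Phi(|u|)\,\mathrm{d}x$; since $\ell\le m$, the quantity $c\,t^{m}-\varepsilon\,t^{\ell}$ is negative for all small $t>0$ no matter how small $\varepsilon$ is, so ``choosing $\varepsilon$ small'' does not close the argument. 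The fix is to absorb $\varepsilon\int\Phi(t|u|)\,\mathrm{d}x$ into the \emph{same modular} rather than into a power of $t$ times a constant: either via the first eigenvalue $\lambda_{1}$ of \eqref{lambda11}, as the paper does, which produces the factor $\bigl(1-\varepsilon/\lambda_{1}\bigr)$ in front of $\iint\Phi(|D_s(tu)|)\,\mathrm{d}\mu+\int V(x)\Phi(t|u|)\,\mathrm{d}x$, or more simply via $(V_0)$, using $\varepsilon\int\Phi(t|u|)\,\mathrm{d}x\le(\varepsilon/V_0)\int V(x)\Phi(t|u|)\,\mathrm{d}x$. After this absorption the only genuinely negative remainder is the $\Phi_{*}$ term, which for $t\le 1$ scales like $t^{\ell_{*}}$ with $\ell_{*}>m$, and positivity of $g$ near the origin follows as in the paper.
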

\begin{proof}
	For given $u\in E\backslash\{0\}$, let us define $g:[0,+\infty)\rightarrow\mathbb{R}$ by $g(t)=J(tu)$. Notice that
	 \begin{equation}
	  g^{\prime}(t)t=\iint\varphi\left(|D_s tu|\right)|D_s tu|^{2}\,\mathrm{d}\mu+\int V(x)\varphi(|tu|)(tu)^{2}\;\mathrm{d}x-\int f(x,tu)tu\;\mathrm{d}x.
	 \end{equation}
	Thus, $g^{\prime}(t)=0$ if and only if $tu\in\mathcal{N}$. Let us consider $t\in(0,1)$. By using \eqref{growth1} we deduce that
	 \[
	  \frac{g(t)}{t^{m}}\geq \frac{1}{t^{m}}\left\{\iint\Phi\left(|D_s tu|\right)\,\mathrm{d}\mu+\int V(x)\Phi(|tu|)\;\mathrm{d}x \right\}-\frac{\varepsilon}{t^{m}}\int \Phi(t|u|)\;\mathrm{d}x-\frac{C_{\varepsilon}}{t^{m}}\int\Phi_{*}(t|u|)\;\mathrm{d}x,
	 \]
	which together with Remark \ref{lambda1} and Proposition~\ref{lema_naru} implies that
	 \[
	  \frac{g(t)}{t^{m}}\geq \left(1-\frac{\varepsilon}{\lambda_{1}}\right)\left\{\iint\Phi\left(|D_s u|\right)\,\mathrm{d}\mu+\int V(x)\Phi(|u|)\;\mathrm{d}x \right\}-C_{\varepsilon}t^{l_{*}-m}\max\{\|u\|_{\Phi_{*}}^{l_{*}},\|u\|_{\Phi^{*}}^{m_{*}}\}.
	 \]
	Since $l_{*}>m$ and taking $\varepsilon>0$ small such that $1-\varepsilon/\lambda_{1}>0$, we conclude that $g(t)>0$ for $t>0$ sufficiently small. On the other hand, for $t>1$ it follows from Proposition~\ref{lema_naru} that
	 \begin{equation}\label{emj5}
	  \frac{g(t)}{t^{m}}\leq \iint\Phi\left(|D_s u|\right)\,\mathrm{d}\mu+\int V(x)\Phi(|u|)\;\mathrm{d}x-\int\frac{F(x,tu)}{|tu|^{m}}|u|^{m}\;\mathrm{d}x.
	 \end{equation}
	Moreover, by using \ref{f3} and Fatou's Lemma we have that
	 \begin{equation}\label{emj6}
	  \lim_{|s|\rightarrow+\infty}\frac{F(x,s)}{|s|^{m}}=+\infty.
	 \end{equation}
	Combining \eqref{emj5} and \eqref{emj6} we conclude that $g(t)<0$ for $t>0$ sufficiently large. Therefore, $g$ admits maximum points in $(0,+\infty)$. Note that any maximum point $t>0$ satisfies
	 \begin{equation}\label{emj7}
	  \frac{1}{t^{m}}\iint\varphi\left(|D_s tu|\right)|D_s tu|^{2}\,\mathrm{d}\mu+\int V(x)\frac{\varphi(|tu|)}{t^{m}}(tu)^{2}\;\mathrm{d}x=\int \frac{f(x,tu)}{t^{m}}tu\;\mathrm{d}x.
	 \end{equation}
	In view of \ref{f4} one may conclude that the term on the righ-hand side is strictly increasing for $t>0$. In order to consider the terms on the left-hand side of \eqref{emj7}, we claim that the function $t\mapsto \varphi(|t|)/|t|^{m-3}t$ is decreasing for $t>0$. In fact, in view of hypothesis $(\varphi_4)$ we obtain
	 \[
	  \frac{d}{dt}\left[\frac{\varphi(t)}{t^{m-2}}\right]=\frac{\displaystyle\varphi(t)}{\displaystyle t^{m-1}}\left[\frac{\displaystyle\varphi^{\prime}(t)t}{\displaystyle\varphi(t)}-(m-2) \right]\leq \frac{\displaystyle\varphi(t)}{t^{m-2}}[m-2-(m-2)]=0.
	 \]
	Notice that the second term in the left-hand side of \eqref{emj7} is
	 \[
	  \int V(x)\frac{\varphi(|tu|)}{t^{m}}(tu)^{2}\;\mathrm{d}x=\int V(x)\frac{\varphi(tu)}{|tu|^{m-2}}|u|^{m}\;\mathrm{d}x,	
	 \]
	which is decreasing on $t>0$ by the above discussion. Moreover, by similar arguments the following function
	 \[
	  t\longmapsto\frac{\displaystyle\varphi\left(|D_s tu|\right)}{|D_s tu|^{m-2}}\frac{|u(x)-u(y)|^{m}}{|x-y|^{N+sm}},
	 \]
	is decreasing on $t>0$, which implies that the first term of \eqref{emj7} is decreasing. Therefore, there exists a unique $t_{0}>0$, depending only on $u$, such that $g^{\prime}(t_{0})=0$. This ends the proof.
\end{proof}

\begin{lemma}\label{coercive}
Let $(\varphi_3)$, $(f_0)$ and $(f_5)$ hold. Then there exists $\alpha >0$ such that
 $$
 J(u) \geq \alpha, \quad\text{for all}\ u\in\mathcal{N}.
 $$
 Furthermore, the functinal $J$ is coercive in the Nehari manifold $\mathcal{N}$.
\end{lemma}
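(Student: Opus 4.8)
The plan is to bound $J$ from below on $\mathcal{N}$ by subtracting a suitable multiple of the Nehari constraint, and then to combine the resulting modular estimate with Lemmas \ref{V0ineq}--\ref{V.ineq} and with the fact that $\mathcal{N}$ stays away from the origin (Lemma \ref{n1}). Fix $u\in\mathcal{N}$, so that $\langle J'(u),u\rangle=0$, and let $\theta>m$ be the constant from \ref{f5}. Writing $J(u)=J(u)-\frac{1}{\theta}\langle J'(u),u\rangle$ and expanding, the kinetic and potential parts produce the integrands $\Phi(|D_su|)-\frac{1}{\theta}\varphi(|D_su|)|D_su|^{2}$ and $V(x)\big(\Phi(|u|)-\frac{1}{\theta}\varphi(|u|)u^{2}\big)$, while the nonlinearity contributes the term $-\int\big(F(x,u)-\frac{1}{\theta}f(x,u)u\big)\,\mathrm{d}x$. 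By $(\varphi_3)$ one has $t^{2}\varphi(t)\le m\,\Phi(t)$ for all $t>0$, so the first two integrands are bounded below by $(1-\frac{m}{\theta})\Phi(|D_su|)$ and $(1-\frac{m}{\theta})V(x)\Phi(|u|)$ respectively, with $1-\frac{m}{\theta}>0$; and by \ref{f5} we have $F(x,u)-\frac{1}{\theta}f(x,u)u\le 0$, so that last term is nonnegative. Hence
\[
J(u)\ \ge\ \Big(1-\frac{m}{\theta}\Big)\left[\iint\Phi(|D_su|)\,\mathrm{d}\mu+\int V(x)\Phi(|u|)\,\mathrm{d}x\right].
\]

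Next I would apply Lemma \ref{V0ineq} and Lemma \ref{V.ineq} to estimate the bracket from below by $\xi^-([u]_{s,\Phi})+\xi^-(\|u\|_{V,\Phi})$. Since $\xi^-(t)=\min\{t^{\ell},t^{m}\}$ is nondecreasing and $\|u\|=[u]_{s,\Phi}+\|u\|_{V,\Phi}$, we get $\xi^-([u]_{s,\Phi})+\xi^-(\|u\|_{V,\Phi})\ge\xi^-\big(\max\{[u]_{s,\Phi},\|u\|_{V,\Phi}\}\big)\ge\xi^-(\|u\|/2)$. By Lemma \ref{n1} the manifold $\mathcal{N}$ is bounded away from zero, so there is $c>0$ with $\|u\|\ge c$ for every $u\in\mathcal{N}$; therefore $J(u)\ge(1-\frac{m}{\theta})\,\xi^-(c/2)=:\alpha>0$, which proves the first assertion.

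Coercivity then follows from the very same estimate $J(u)\ge(1-\frac{m}{\theta})\,\xi^-(\|u\|/2)$, valid for all $u\in\mathcal{N}$: since $\xi^-(t)\to\infty$ as $t\to\infty$, any sequence $(u_n)\subset\mathcal{N}$ with $\|u_n\|\to\infty$ satisfies $J(u_n)\to\infty$. I expect the only genuine, and rather minor, difficulty to lie in the bookkeeping of the second paragraph: one must combine the two \emph{separate} modular lower bounds of Lemmas \ref{V0ineq}--\ref{V.ineq}, which are stated in terms of the Gagliardo seminorm $[u]_{s,\Phi}$ and the weighted Luxemburg norm $\|u\|_{V,\Phi}$, with the uniform lower bound $\|u\|\ge c$ on $\mathcal{N}$, and this is exactly what the monotonicity and coercivity of $\xi^-$ make transparent.
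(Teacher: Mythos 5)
Your proof is correct and follows essentially the same route as the paper: subtract $\frac{1}{\theta}\langle J'(u),u\rangle=0$, use $(\varphi_3)$ to extract the factor $1-\frac{m}{\theta}$, the AR-type condition to discard the nonlinear term, Lemmas \ref{V0ineq}--\ref{V.ineq} for the modular lower bounds, and Lemma \ref{n1} for the uniform distance from the origin. Your explicit reduction $\xi^-([u]_{s,\Phi})+\xi^-(\|u\|_{V,\Phi})\ge\xi^-(\|u\|/2)$ is a small bookkeeping step the paper leaves implicit, and it is handled correctly.
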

\begin{proof}
It follows from Lemma \ref{n1} that there exists $c > 0$ such that
$\|u\| \geq c$ holds true for any $u \in \mathcal{N}$.
On the other hand, for any $ u\in\mathcal{N}$, the conditions $(\varphi_3)$ and \eqref{growth1} give
\begin{align}\label{est}
   J(u)&\geq \iint \Phi\left(|D_s u|\right)\,\mathrm{d}\mu+\int V(x)\Phi(|u|)\;\mathrm{d}x-\frac{1}{\theta}\int f(x,u)u\;\mathrm{d}x \nonumber \\
    &= \iint \Phi\left(|D_s u|\right)\,\mathrm{d}\mu+\int V(x)\Phi(|u|)\;\mathrm{d}x-\frac{1}{\theta}\iint\varphi\left(|D_s u|\right)|D_s u|^{2}\,\mathrm{d}\mu-\frac{1}{\theta}\int V(x)\varphi(|u|)u^{2}\;\mathrm{d}x \nonumber \\
    &\geq \left(1-\frac{m}{\theta}\right)\left[\iint \Phi\left(|D_s u|\right)\,\mathrm{d}\mu+\int V(x)\Phi(|u|)\;\mathrm{d}x\right] \nonumber \\
    &\geq \left(1-\frac{m}{\theta}\right)\left[\xi^-([u]_{s,\Phi})+\xi^-(\|u\|_{V,\Phi})\right].
\end{align}
Since $\theta>m$, we have
$
J(u)>\alpha_2,
$
for some $\alpha_2>0$. In particular, by using \eqref{est}, we obtain that
$J(u) \rightarrow \infty$ as $\|u\| \rightarrow \infty$ with $u \in \mathcal{N}$. Then, the functional $J$ is coercive over the Nehari manifold. This completes the proof.
\end{proof}

Let us introduce the ground state energy level given by
 \[
  c_{\mathcal{N}}:=\inf_{u\in\mathcal{N}}J(u).
 \]
Our goal is to ensure that level $c_{\mathcal{N}}$ is attained by a function $u \in X$, which will be a critical point for the energy function $J$. Notice that $c_{\mathcal{N}}>0$ is satisfied whose the proof can be done using the fact that the functional $J$ is coercive in the Nehari manifold, see Proposition \ref{coercive}.

\subsection{Proof of Theorem \ref{A} (Unbounded potential)}

In order to prove existence of solution for our main problem, we borrow and adapt some ideas from \cite{bw}.
Let $(u_{n})\subset \mathcal{N}$ be a $(PS)_{c_{\mathcal{N}}}$-sequence, that is,
\begin{equation}\label{psc}
J(u_{n})\rightarrow c_{\mathcal{N}} \quad \mbox{and} \quad J^{\prime}(u_{n})u_{n}\rightarrow0.
\end{equation}
In view of Lemma \ref{coercive} we have that $(u_{n})$ is bounded in $X$. Thus, up to a subsequence, $u_{n}\rightharpoonup u_{0}$ weakly in $X$ for some $u_0 \in X$. In light of Theorem \ref{compact} it follows that $u_{n}\rightarrow u_{0}$ strongly in $L_{\Psi}(\mathbb{R}^{N})$ and consequently $u_{n}(x)\rightarrow u_{0}(x)$, a.e. in $\mathbb{R}^{N}$ and there exists $h\in L_{\Psi}(\mathbb{R}^{N})$ such that $\vert u_{n}\vert\leq h$ in $\mathbb{R}^N$. We claim that
 \begin{equation*}
  f(x,u_{n})\rightarrow f(x,u_{0}), \quad \mbox{strongly in } L_{\Psi}(\mathbb{R}^{N}).
 \end{equation*}
By using $(f_{0})$ and the fact that $\Psi(t\psi(t))\leq 2\Psi(t)$, one can deduce
 \begin{equation*}
  \Psi(\vert f(x,u_{n})-f(x,u_{0})\vert)\leq C[\Psi(h)+\Psi(u_{0})]\in L_{1}(\mathbb{R}^{N}).
 \end{equation*}
The claim follows by applying the Lebesgue Dominated Convergence Theorem. Moreover, by using the fact that  $u_{n}\rightharpoonup u_{0}$ weakly in $X$ we infer also that  $J^{\prime}(u_0) h = 0$ for each $h \in X$. In particular, $u_0$ is a weak solution to the elliptic problem \eqref{p1} which implies that $S(f(\cdot, u_0)) = u_0$.

In view of Section \ref{sec5}, we have that $L:=(-\Delta_{\Phi})^{s}+V(x)\varphi(\cdot)$ is a homeomorphism. Recall also that
\begin{equation}\label{e11}
  \langle L(u_{n}), h \rangle =  \langle J^{\prime}(u_{n}), h \rangle + \int f(x,u_{n}) h \rightarrow \int f(x,u_{0}) h, \quad \mbox{as} \quad n\rightarrow\infty
 \end{equation}
 and
 \begin{equation}\label{e12}
 \langle L(u_{n}), h \rangle \rightarrow  \langle L(u_0), h \rangle, \quad \mbox{as} \quad n\rightarrow\infty
 \end{equation}
holds true for any $h \in X$.
Thus, by using the fact that $L$ is a homeomorphism and taking into account \eqref{e11} and \eqref{e12}, we obtain
$$\lim_{n \to \infty}u_{n} = \lim_{n \rightarrow \infty}  S(f(x,u_{n})) = S(f(x,u_{0})) = u_0.$$ Therefore, $u_{n}\rightarrow u_{0}$ strongly in $X$. Now, by using a density argument mention also that $J^{\prime}(u)h=0$ holds for all $h\in W^{s, \Phi}(\mathbb{R}^N)$. Furthermore, using the fact $u_{n}\rightarrow u_{0}$ strongly in $X$, we deduce that that $J(u)=c_{\mathcal{N}}$, that is, $u$ is a ground state solution.
Recall that the main tool used to obtain the assertion just above is the compact embedding $X\hookrightarrow L^{\Phi}(\mathbb{R}^{N})$. This finishes the proof.

\subsection{Proof of Theorem \ref{B} (Bounded potential)}\label{s5}

Since the potential $V$ is bounded, the natural framework will be $X = W^{s, \Phi}(\mathbb{R}^N)$. Analogously to the proof of Theorem \ref{A}, we may obtain a minimizing sequence $(u_{n})$ such that $u_{n}\rightharpoonup u$ weakly in $X$. Notice that $u$ is in fact a critical point for $J$ by using a standard argument taking into account that $C^{\infty}_c(\mathbb{R}^N)$ is dense in $X$. Assuming that $u\neq0$ we are done. If $u=0$, then we use Lemma \ref{lions} and we define the shift sequence $v_{n}(x)=u(x+y_{n}), x \in \mathbb{R}^N$. Since $x \mapsto V(x)$ and $x \mapsto F(x,u)$ are $1$-periodic we know that $J$ is invariant under translations. Notice also that
 \[
  \Vert v_{n}\Vert = \Vert u_{n} \Vert\leq C.
 \]
holds for some constants $C > 0$.
Thus, $v_{n}\rightharpoonup v$ weakly in $X$ for some $v \in X$. In view of Lemma \ref{lions} we conclude that $v\neq0$. Moreover, due to the periodicity of the energy functional, we conclude that $J(v)=c_{\mathcal{N}}$ and $J^{\prime}(v)=0$. Here we have to prove that $J^{\prime}(v)h=0$, for all $h\in W^{s,\Phi}(\mathbb{R}^{N})$. This ends the proof.


\end{document}